\newtheorem{thm}{Theorem}[section]
\newtheorem{cor}[thm]{Corollary}
\newtheorem{prop}[thm]{Proposition}
\theoremstyle{remark}
\newtheorem{rem}[thm]{Remark}
\newtheorem*{rem*}{Remark}
\theoremstyle{definition}
\newtheorem{dfn}[thm]{Definition}
\newtheorem{ex}[thm]{Example}
\numberwithin{equation}{section}
\title{On the singular points approached by the medial axis.}
\author{by Adam Bialozyt}
\begin{document}
\begin{abstract}
This paper studies the singular points of a set reached by the medial axis. The investigation of $\mathscr{C}^1$ smooth points generalises the results of Birbrair and Denkowski \cite{BirbrairDenkowski} into higher dimensions and general o-minimal structures. The set of points away from the set's boundary where the set fails to be $\mathscr{C}^1$ smooth is also studied.
\end{abstract}

\maketitle

\section{Introduction}

The medial axis of a closed set $X\subset\mathbb{R}^n$ (cf. Definition~\ref{definicja}), introduced by Blum in \cite{Blum} as a locus of points in $\mathbb{R}^n\backslash X$ with more than one closest point in $X$ plays a central object in the pattern recognition theory. It emerges under various names in numerous mathematical and application problems. Its ability of lossless data compression makes it an appealing object in tomography, robotics, and simulations. On the other hand, variants of its definition appear naturally in the fields of partial differential equations, convex analysis and others. An inherent connection with the initial set's geometry makes it an interesting object for investigating geometrical and topological properties, such as its singularities or homotopy groups.

In this paper, we focus our attention on the geometrical study of relations between the medial axis and the singularities of $X$. The investigation is split into two parts. After the preliminaries of the second section, we study the $\mathcal{C}^1$ smooth part of $X$ approached by its medial axis in the third section. In Theorem~\ref{superquadratic theorem}, we prove that the medial axis~$M_X$ detects points of $X$ where the set deviates quickly from its tangent cone. Theorem~\ref{Superqadratic curve} inverts this result for curves. The fourth section concentrates on $\mathcal{C}^1$ singular part of $X$. In Theorem~\ref{lepszy motzkin}, we propose a criterium for detecting a medial axis approaching the set based on the tangent cones of $X$. Another one, based on finding an appropriate submanifold in $X$, is presented in Corollary~\ref{ostatni wniosek}.  

The first historical result in the direction of singularities detection by the medial axis, although never explicitly stated as a medial axis fact, is a revered theorem by John Nash \cite{Nash}. 
\begin{thm}[Nash Lemma]\label{Nash}
Let $X$ be a $\mathscr{C}^k$-submanifold of $\mathbb{R}^n$ with $k\geq 2$.
Then there exists $U$ - a neighbourhood of $X$ such that 
\begin{enumerate}
    \item For all $a\in U$, there is precisely one point of $X$ closest to $a$;
    \item The function assigning to a point an element in $X$ closest to it is $\mathscr{C}^{k-1}$ smooth.
\end{enumerate}
\end{thm}
Knowing that the set of discontinuities of the function assigning to a point an element in $X$ closest to it coincides with the medial axis of $X$,  Theorem~\ref{Nash} yields an immediate consequence: the medial axis of $X$ is separated from the $\mathcal{C}^k$-smooth part of $X$ for $k\geq 2$. The proof of the Nash Lemma uses the implicit function theorem applied to the partial derivatives of a parametrisation of $X$, so the assumption on $k$ cannot be lowered. This raises a natural scientific curiosity about the cases $k=1$ and $k=0$. The question is also most natural from the point of view of partial differential equations theory. The distance function is a viscosity solution for a simple eikonal equation $\|\nabla d\|=1$ with the zero Dirichlet condition \cite{Lions}. While the equation order equals one, it is natural to ask about the largest domain where the solution is a $\mathscr{C}^1$-smooth function. The Nash Lemma settles then the existence of $\mathscr{C}^1$-smooth solution for the regions with at least $\mathscr{C}^2$-smooth boundary, the $\mathscr{C}^2$-singular boundaries, however, remain out of its grasp. Another motivation behind studying the intersection of a set and its medial axis originates from the fascinating Steiner formula. According to his results, whenever the medial axis does not approach a given set, the volume of the set's parallelogram can be expressed by a polynomial \cite{Federer}.

Throughout the paper, we restrict our attention to sets that are definable in some o-minimal structure, expanding the field of real numbers. Such an approach gives us a framework with appealing definitions of global and local dimensions (coinciding with the Hausdorff dimension) and a handful of valuable tools such as curve selection lemma and cylindrical definable cell decomposition. At the same time, it protects us from pathological sets while conserving the applicability of the setting. Readers who are not familiar with the notion of definable sets may think of them as semialgebraic sets. An excellent introduction to the notion is found in \cite{Coste} or \cite{Rolin}. Although we do not require any more conditions on the o-minimal structure, on a few occasions, we compare our results with sets definable in polynomially bounded structures. These are the structures where the growth of any definable function (that is, a function with a definable graph) can be bounded by a polynomial. Again, the structure of semialgebraic sets forms an example of a polynomially bounded structure.

\section{Preliminaries}
In this section, we define basic objects which are of our interest and recall their basic properties.

For $x,y\in\mathbb{R}^n$, we denote by $[x,y]$ the closed segment joining $x$ and $y$, by $d(x,y)$ their euclidean distance, and by $\angle(x,y)$ the angle formed by these vectors, provided they are nonzero. The closed ball centred at~$a$, of radius $r$, is denoted by $\mathbb{B}(a,r)$, and $\mathbb{S}(a,r)$ denotes its boundary -- an $(n-1)$-dimensional sphere of radius $r$ centred at $a$. We adopt also Minkowski notation for dilations -- for a pair of a set and a point $(X,v)$ we denote  $X+v:=\{x+v\mid x\in X\}$, $Xv:=\{xv\mid x\in X\}$.

Whenever in the paper the continuity (or upper- and lower limits) of a family of sets or a (multi-)function is mentioned, it refers to the continuity (or upper- and lower limits) in the Kuratowski sense. Let us recall quickly that being given a family $\lbrace X_t\rbrace_{t\in\mathbb{R}^k}$ of subsets of $\mathbb{R}^n$, a point $x\in \mathbb{R}^n$ belongs to:
\[\limsup_{t\rightarrow t_0} X_t\text{ iff there exist sequences }\mathbb{R}^k\ni t_\nu\rightarrow t_0\text{ and }X_{t_\nu}\ni x_\nu\rightarrow x;\]
\[\liminf_{t\rightarrow t_0} X_t\text{ iff for each sequence }\mathbb{R}^k\ni t_\nu\rightarrow t_0\text{ we can find }X_{t_\nu}\ni x_\nu\rightarrow x.\]
Naturally, whenever two limiting sets coincide, we call them the Kuratowski limit. More on the Kuratowski convergence is found in the book \cite{RockafellarWets}, and an introduction to its relation with medial axes and conflict sets is given in \cite{DenkowskiLimits}. 

We will use the notation $Reg_k X$ for the points of $X$ at which it is a $\mathscr{C}^k$-submanifold and $Sng_kX:=X\setminus Reg_k X$. When $X$ is definable in some o-minimal structure, $Reg_k X$ forms a dense definable subset of $X$.

For a closed nonempty subset $X$ of $\mathbb{R}^n$ endowed with the euclidean norm, we define the distance of a point $a\in\mathbb{R}^n$ from $X$ by 
\[d(a,X)=d(a):= \inf\lbrace \|a-x\|\; |\,x\in X\rbrace,\]

which allows us to define the set of closest points of $X$ to $a$ by

\[m_X(a)=m(a):=\lbrace x\in X|\, d(a,X)=\|a-x\|\rbrace.\]

\begin{dfn}\label{definicja}
The main object discussed in this paper is the \textit{medial axis} of a closed set $X$, that is, the set of points of $\mathbb{R}^n$ admitting more than one closest point to the set $X$ i.e.
\[M_X:=\lbrace a\in\mathbb{R}^n|\, \# m(a)>1\rbrace.\]
\end{dfn}
A descriptive way to imagine the medial axis brings a picture of the propagation of a firefront starting at $X$. In this case, the medial axis of $X$ is precisely the set of points where fronts originating from different starting points meet. This picturesque idea illustrates maybe the most profound feature of the medial axis -- it collects exactly those points of the ambient space at which the distance function is not differentiable.

Recall that, for any $x\in X$, we call the limit $\limsup_{t\to 0}\frac{1}{t}(X-x)$ the \textit{(Peano) tangent cone} of $X$ at $x$ and denote it by $C_xX$. Furthermore, if $X$ is definable in an o-minimal structure, the upperlimit in the tangent cone definition turns into the Kuratowski limit due to the curve selection lemma. Finally, the normal cone of $X$ at $x$, denoted by $N_xX$ is defined by 
\[N_xX:=\lbrace x\in \mathbb{R}^n|\,\forall v\in C_aX: \,\langle x,v\rangle\leq 0\rbrace\]
(where $\langle\cdot,\cdot\rangle$ denotes the natural scalar product on $\mathbb{R}^n$). 

There are two subsets of $N_xX$ closely related to the medial axis of special interest to us. Namely the \textit{normal set} of $x\in X$: 
\[\mathcal{N}(x):=\lbrace a\in\mathbb{R}^n|\,x\in m(a)\rbrace\]

and the \textit{univalued normal set} of $x\in X$: 
\[\mathcal{N}'(x):=\lbrace a\in\mathbb{R}^n|\,\lbrace x\rbrace = m(a)\rbrace.\]

As was shown in \cite{BirbrairDenkowski} 

\begin{prop}\label{Properties}
For a definable set $X\subset\mathbb{R}^n$ and $x\in X$:
\begin{enumerate}
    \item The medial axis $M_X$, and the multifunctions $m,\mathcal{N},\mathcal{N}'$ are definable.
    \item $\mathcal{N}'(x)\subset \mathcal{N}(x)\subset N_xX+x$
    \item $M_X=\bigcup_{x\in X} 
\mathcal{N}(x)\backslash\mathcal{N}'(x)$

    \item $\limsup_{X\ni y\rightarrow x}\mathcal{N}(y)\subset \mathcal{N}(x)$. 
\end{enumerate}
\end{prop}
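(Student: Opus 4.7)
The plan is to dispose of the four items in order of increasing technical content: first the set-theoretic identity (3), then the definability claim (1), then the tangential orthogonality (2), and finally the upper semicontinuity (4).

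I would unpack (3) directly from the definitions. If $a\in M_X$ then $\#m(a)\geq 2$, so choosing any $x\in m(a)$ I get $a\in\mathcal{N}(x)$ while $m(a)\neq\{x\}$, hence $a\in\mathcal{N}(x)\setminus\mathcal{N}'(x)$; conversely, $a\in\mathcal{N}(x)\setminus\mathcal{N}'(x)$ means $x\in m(a)$ but $\{x\}\neq m(a)$, so $\#m(a)\geq 2$. This is essentially bookkeeping.

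For (1) I plan to invoke closure of the ambient o-minimal structure under projections and Boolean combinations. The key observation is that the graph of the distance function is definable, since ``$d(a)=r$'' is equivalent to saying $r\geq 0$, every $y\in X$ satisfies $\|a-y\|\geq r$, and some $x\in X$ satisfies $\|a-x\|=r$ (this infimum being attained because $X$ is closed). From the definability of $d$ I would then read off definability of the graphs of $m$, $\mathcal{N}$, $\mathcal{N}'$, and of $M_X$ itself, each by a short first-order formula over $X$.

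For (2), given $a\in\mathcal{N}(x)$ with $x\in Reg_1 X$, I would observe that $y\mapsto\|a-y\|^2$ restricted to $X$ attains its global minimum at $y=x$. Working in a local $\mathscr{C}^1$ parametrisation of $X$ around $x$, the restricted function is $\mathscr{C}^1$ with a critical point at $x$; its differential there sends $v\in T_xX$ to $-2\langle a-x,v\rangle$, which must vanish, giving $a-x\in T_xX^{\perp}$.

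For (4), if $a\in\limsup_{X\ni y\to x}\mathcal{N}(y)$, the Kuratowski upper-limit definition supplies sequences $y_n\to x$ in $X$ and $a_n\to a$ with $a_n\in\mathcal{N}(y_n)$, i.e.\ $\|a_n-y_n\|=d(a_n,X)$. Since $d(\cdot,X)$ is $1$-Lipschitz, the right-hand side tends to $d(a,X)$ while the left-hand side tends to $\|a-x\|$; passing to the limit I conclude $\|a-x\|=d(a,X)$, hence $a\in\mathcal{N}(x)$. The main subtlety in the whole proposition is (2), where the $\mathscr{C}^1$-regularity must be genuinely exploited via a local chart; the other three parts reduce to formal manipulations of the definitions together with continuity of the distance function.
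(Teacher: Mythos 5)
Your four arguments are all correct; note, however, that the paper does not prove this proposition at all --- it is quoted verbatim from Birbrair--Denkowski with a citation, so there is no in-paper proof to compare against. Your treatment is the standard one and it holds up: (3) is pure bookkeeping once one notes that $m(a)\neq\emptyset$ (which uses the standing assumption that $X$ is closed and nonempty); (1) follows from definability of the graph of $d(\cdot,X)$ and closure of the structure under first-order constructions, given that $X$ itself is definable; (2) is the first-order criticality condition for the interior minimum of $y\mapsto\|a-y\|^2$ in a local $\mathscr{C}^1$ chart; and (4) is exactly the interplay between the $1$-Lipschitz continuity of $d(\cdot,X)$ and the sequential description of the Kuratowski upper limit. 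The only point worth making explicit in (2) is that $x\in Reg_1X$ is what licenses the existence of the chart and the identification of the image of its differential with $T_xX$; you say this, so the argument is complete.
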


During the development of theory, several ideas for measuring the distance between $X$ and $M_X$ emerged. To name but a few, there is Federer's reach and the reaching radius of Birbrair and Denkowski. We will use a modified version of the reaching radius also found in \cite{Bial1}.

\begin{dfn}
For any point $a\in X$, we write $V_a:=N_aX\cap\mathbb{S}$ to be the set of \textit{directions normal} to $X$ at $a$. Then we denote the \textit{limit set of normal directions} by \[\widetilde{V}_a:=\limsup_{a_\nu\to a}V_{a_\nu}.\]
\end{dfn}

\begin{rem}
If $X$ is a $\mathscr{C}^1$-smooth manifold in a certain neighbourhood of $a\in X$, then, by definition, the tangent spaces and, what follows, the normal spaces are continuous at $a$. Therefore, the limit set of normal directions is just the set of normal directions in such a case.\end{rem}

For a point $a\in X$, we introduce the following definition of a reaching radius.

\begin{dfn}
For $v\in V_a$, we define a \textit{directional reaching radius} by
\[r_v(a):=\sup \lbrace t\geq 0|\,a\in m(a+tv)\rbrace.\]

Then for $v\in \widetilde{V}_a$ we define a \textit{limiting directional reaching radius} by
\[\Tilde{r}_v(a):=\liminf\limits_{X\ni x\to a, V_x\ni v_x\to v\in \widetilde{V}_a}r_{v_x}(x),\]

and finally the \textit{reaching radius} at $a$ is
\[r(a)=\inf_{v\in \widetilde{V}_a}\tilde{r}_v(a).\]
\end{dfn}

The results from \cite{Bial1} and \cite{BirbrairDenkowski} state

\begin{thm}
For any $x\in X$ there is \[r(x)=0 \iff x\in \overline{M_X}\]
\end{thm}

\begin{prop}
The function $\rho: V_a\ni v\rightarrow \rho(v) = r_v(a)\in (0,+\infty]$ is continuous for any $a\in Reg_2 X$. 
\end{prop}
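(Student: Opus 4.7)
The plan is to prove continuity of $\rho$ by establishing upper and lower semicontinuity separately; the upper half is essentially a formal consequence of Proposition~\ref{Properties}, while the lower half is where the $\mathscr{C}^2$ hypothesis enters in an essential way.

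For upper semicontinuity I would use the closedness of the normal set $\mathcal{N}(a)$, which is immediate from the continuity of $d(\cdot, X)$. If $v_n\to v$ in $V_a$ with $r_{v_n}(a)\to R$, then the set $\{t\geq 0 : a\in m(a+tv_n)\}$ is closed (being the preimage of $\mathcal{N}(a)$ under $t\mapsto a+tv_n$), so the defining supremum is attained and $a+r_{v_n}(a)v_n\in\mathcal{N}(a)$. Passing to the limit gives $a+Rv\in\mathcal{N}(a)$, hence $R\leq r_v(a)$.

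For lower semicontinuity I would first rewrite the condition $a\in m(a+tv)$ as $\operatorname{int}\mathbb{B}(a+tv,t)\cap X=\emptyset$, which by expansion of $\|a+tv-x\|^2\geq t^2$ is equivalent to $\|x-a\|^2\geq 2t\langle v,x-a\rangle$ for every $x\in X$. This yields the key formula
$$r_v(a)=\inf_{x\in X,\ \langle v,x-a\rangle>0}\frac{\|x-a\|^2}{2\langle v,x-a\rangle}.$$
Then I would split the analysis: using a local $\mathscr{C}^2$ graph parametrisation $x=(\xi,f(\xi))$ near $a=0$ with $Df(0)=0$, a second-order Taylor expansion yields that the ratio tends to $|\xi|^2/\langle u, D^2 f(0)[\xi,\xi]\rangle$ as $\xi\to 0$, where $u$ is the normal direction corresponding to $v$. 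The near-$a$ infimum is then $1/\kappa_u^{\max}$, the reciprocal of the maximal normal curvature in direction $u$, which is continuous in $u$. On the complementary region $\{x\in X:\|x-a\|\geq\varepsilon\}$, intersected with a large ball, the ratio is jointly continuous in $(v,x)$ where the denominator stays positive, and compactness delivers continuity of the local infimum in $v$.

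The hard part, where I expect the bulk of the technical work, is the handling of exceptional points $x\in X$ for which $\langle v,x-a\rangle=0$ (so they do not constrain $r_v(a)$) but $\langle v_n,x-a\rangle>0$ for the perturbed directions (so they do constrain $r_{v_n}(a)$, potentially contributing a small ratio in the limit). Close to $a$ this is tamed by the $\mathscr{C}^2$ structure, since the perturbation $\langle v_n-v,x-a\rangle$ is first order in $\|x-a\|$ while $\|x-a\|^2$ is second order, forcing the ratio to blow up; far from $a$, definability of $X$ confines the exceptional locus to a lower-dimensional definable subset of $V_a\times X$, and the challenge is quantifying uniformly that this locus cannot spoil the infimum as $v_n\to v$. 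This is exactly where $\mathscr{C}^2$ (as opposed to merely $\mathscr{C}^1$) smoothness should be indispensable.
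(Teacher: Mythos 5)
First, a point of comparison: the paper does not actually prove this proposition---it is imported wholesale from \cite{Bial1} ("The results from \cite{Bial1} states\dots")---so there is no in-paper argument to measure you against. Judged on its own terms, your strategy is viable and the key formula is correct: $a\in m(a+tv)$ is indeed equivalent to the empty-ball condition, which for unit $v$ expands to $\|x-a\|^2\geq 2t\langle v,x-a\rangle$ for all $x\in X$, giving the Federer-type expression $r_v(a)=\inf\{\|x-a\|^2/(2\langle v,x-a\rangle)\}$. The upper-semicontinuity half via closedness of $\mathcal{N}(a)$ and the fact that $\{t\geq 0:\ a\in m(a+tv)\}$ is a closed interval goes through (including the case $R=+\infty$, by applying the interval property at each finite level $T$).

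The genuine gap is that you leave the lower-semicontinuity half unfinished at exactly the point you flag, and your diagnosis of why that step is hard is mistaken. An exceptional point with $\langle v,x-a\rangle=0<\langle v_n,x-a\rangle$ and $\|x-a\|\geq\varepsilon$ cannot "contribute a small ratio in the limit": its ratio is bounded below by $\varepsilon^2/(2\langle v_n,x-a\rangle)$, which tends to $+\infty$ as the denominator vanishes---the same blow-up mechanism you correctly invoke near $a$. No definability of an exceptional locus and no uniform quantification over it is needed; elementary compactness suffices. Concretely, run the lower bound on sequences: if $r_{v_n}(a)\to R<r_v(a)$, pick near-optimal $x_n\in X$ and pass to a subsequence so that either $x_n\to a$ (handled by your Taylor expansion, noting that when $\langle u_n,D^2f(0)[\theta_n,\theta_n]\rangle\to 0^{+}$ the ratio again blows up), or $x_n\to x^{*}\neq a$ in the closed set $X$, in which case boundedness of the ratios forces $\langle v,x^{*}-a\rangle>0$ and the limit ratio equals $\|x^{*}-a\|^2/(2\langle v,x^{*}-a\rangle)\geq r_v(a)$, or $\|x_n\|\to\infty$, in which case Cauchy--Schwarz gives a ratio at least $\|x_n-a\|/2\to\infty$; each case contradicts $R<r_v(a)$. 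I would also advise against phrasing the argument as a minimum of two infima over fixed regions $\{\|x-a\|<\varepsilon\}$ and $\{\|x-a\|\geq\varepsilon\}$: the near-region infimum equals $1/\kappa_u^{\max}$ only in the limit $\varepsilon\to 0$, and the sequential formulation avoids that bookkeeping entirely.
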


\section{C1-smooth case}

Throughout this section, we assume $X$ to be a $\mathscr{C}^1$-smooth manifold in a neighbourhood of $x\in X$.
Although the medial axis does not detect every $\mathscr{C}^2$-singularity (as one can check by examining a graph of $f(x)=x^2\text{sgn }x$), it is still possible to point out a subset of $ Sng_2 X$ approached by the medial axis. This section is influenced by the characterisation done by Birbrair and Denkowski on the Euclidean plane. According to their findings for planar curves, the superquadracity (see Definition~\ref{superquadraticDfn}) is necessary and sufficient for approaching the $\mathscr{C}^1$-smooth part of the curve by its medial axis. We present here a transition of the theory to higher dimensional spaces. Due to an altered environment, our methods of proof differ from the ones in the mentioned paper.  Unfortunately, the outcome is weaker as the situation grew more complex than the planar one -- an example at the end of the section shows that superquadracity is not enough to detect every instance of medial axes which approaches the underlying set.

\begin{dfn}\label{superquadraticDfn}
Assume that $X$ is a closed definable subset of $\mathbb{R}^n$. We say that $X$ is \textit{superquadratic} at $a\in Sng_2X\cap Reg_1X$ if a function \[g(\varepsilon):=\max_{x\in X,\|p(x)-a\|=\varepsilon}\|x-p(x)\|,\] where $p(x)$ is an orthogonal projection on $T_aX+a$, has a limit 
\[\lim_{\varepsilon\to 0^+}g(\varepsilon)/ \varepsilon^2\to+\infty.\]

We collect all points of $X$'s superquadracity in a set
\[\mathcal{SQ}(X):=\lbrace x\in X| \; X \text{ is superquadractic at }x\rbrace.\]

\end{dfn}
\begin{rem}
\begin{enumerate}
    \item Clearly, for a definable set $X\subset\mathbb{R}^n$ and any point $x\in\mathcal{SQ}(X)$, the local dimension of $X$ at $x$, that is \[\dim_x X:=\min\{\dim (X\cap U)\mid U\text{-neighbourhood of }x\},\] must satisfy $0<\dim_xX<n$.
    \item If $X$ is a set definable in an o-minimal structure which is polynomially bounded, function $g(\varepsilon)$ can be written in a form \[g(\varepsilon)=\alpha\varepsilon ^{\eta} +o(\varepsilon^\eta)\text{ with }\alpha,\eta\in \mathbb{R},\alpha> 0.\] In such a case, the exponent $\eta$ is equal to what is known as the order of $g$ at zero:
    \[ord_0 g: = \sup\{\theta > 0 \mid |g (x)| \leq const.\|x\|^\theta , \quad \|x\| \ll 1\}.\]
    Then, $X$ is superquadratic at one of its points if and only if $\eta\in (1,2)$. Furthermore, in the case of hypersurfaces definable in polynomially bounded structures, the definition simplifies to the one proposed in \cite{BirbrairDenkowski}.
    \item Two cornerstone examples of functions bearing superquadratic graphs at $0$ are $f(x)=|x|^{3/2}$ and $f(x)=x^2\ln|x|$ extended through zero (the latter one not belonging to any polynomially bounded structure). The medial axes of their graphs approach the origin, making them inseparable from the graphs.
\end{enumerate}
\end{rem}

Being interested in conditions under which $\overline{M_X}\cap X$ is not void, we can rotate, translate and even scale $X$ freely. Therefore we restrict the study of superquadratic points to a situation where $a=0$ and $T_aX=\mathbb{R}^k\times\lbrace 0\rbrace^{n-k}$ ($k=\dim_aX$). In that case, we identify $\mathbb{R}^n$ with the Cartesian product $\mathbb{R}^k\times\mathbb{R}^{n-k}$ with coordinates $x$ and $y$ (formula for $g(\varepsilon)$ is written respectively as $\max_{(x,y)\in X,\|x\|=\varepsilon}\|y\|$). It is easy to observe then (for example, by using the definable cell decomposition adapted to $X$) that a definable set $X$ yields a definable function $g$. 


We begin our study of superquadracity by seizing control over the codimension of the set. For that purpose, we prove a result about the projections of superquadratic sets.

\begin{prop}\label{Superqadratic projection}
Let $X$ be a definable set in $\mathbb{R}^n$ of dimension $k$ with $0\in Reg_1 X$ and $T_0X=\mathbb{R}^k\times\lbrace 0\rbrace^{n-k}$. Denote by $\pi_i:\mathbb{R}^n\rightarrow\mathbb{R}^{k+1}$ a natural projection onto the first $k$ and $(k+i)$th coordinates. Then $0\in\mathcal{SQ}(X)$ if and only if there exists $j\in \lbrace 1,\ldots,n-k\rbrace$ such that $0\in\mathcal{SQ}(\pi_j(X)).$
\end{prop}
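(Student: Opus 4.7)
My plan is to exploit the $\mathscr{C}^1$-regularity at $0$ to present both $X$ and each $\pi_j(X)$ as graphs over $\mathbb{R}^k$, and then reduce the proposition to a coordinate-wise comparison of the resulting height functions via an elementary $\ell^2$-versus-$\ell^\infty$ inequality.

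First I would use that $0 \in Reg_1 X$ with $T_0X = \mathbb{R}^k\times\{0\}^{n-k}$ to write $X$, in a neighbourhood of $0$, as the graph of a definable $\mathscr{C}^1$ map $\varphi = (\varphi_1,\dots,\varphi_{n-k})\colon U \subset \mathbb{R}^k \to \mathbb{R}^{n-k}$ with $\varphi(0)=0$ and $D\varphi(0)=0$. Each $\pi_j(X)$ then coincides locally with the graph of the single component $\varphi_j$, so automatically $0 \in Reg_1 \pi_j(X)$ with $T_0\pi_j(X) = \mathbb{R}^k\times\{0\}$, and the two height functions rewrite as
\[
g(\varepsilon) = \max_{\|x\|=\varepsilon}\|\varphi(x)\|, \qquad g_j(\varepsilon) = \max_{\|x\|=\varepsilon}|\varphi_j(x)|.
\]

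The crucial link will be the double inequality
\[
\max_j g_j(\varepsilon) \;\leq\; g(\varepsilon) \;\leq\; \sqrt{n-k}\,\max_j g_j(\varepsilon),
\]
which I read off pointwise from $\max_i |\varphi_i(x)| \leq \|\varphi(x)\| \leq \sqrt{n-k}\,\max_i |\varphi_i(x)|$ on the sphere $\{\|x\|=\varepsilon\}$. Because all functions in sight are definable and vanish at $0$, the o-minimal monotonicity theorem endows every ratio $g_j(\varepsilon)/\varepsilon^\beta$ with a limit in $[0,\infty]$ as $\varepsilon\to 0^+$. Given $g(\varepsilon) = a\varepsilon^\alpha + o(\varepsilon^\alpha)$ with $a > 0$ and $\alpha \in (0,2)$, the upper bound $g_j \leq g$ places $\lim g_j(\varepsilon)/\varepsilon^\alpha$ in $[0,a]$, while the lower bound $g \leq \sqrt{n-k}\,\max_j g_j$ forces at least one such limit to be strictly positive; for that index $j^{*}$ I obtain $g_{j^{*}}(\varepsilon) = a_{j^{*}}\varepsilon^\alpha + o(\varepsilon^\alpha)$ with the same exponent $\alpha$, hence $0 \in \mathcal{SQ}(\pi_{j^{*}}(X))$. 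The reverse implication should run symmetrically: a superquadratic $g_j$ gives $g \geq g_j \gtrsim \varepsilon^{\alpha_j}$ from below, and the same double inequality allows one to extract a polynomial leading term of $g$ with exponent in $(0,\alpha_j]\subset(0,2)$.

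The hard part will be safely invoking the o-minimal monotonicity theorem so as to guarantee the leading-term asymptotics for all of $g,g_1,\dots,g_{n-k}$; the rest is elementary. The $\mathscr{C}^2$-singularity of $\pi_{j^{*}}(X)$ at $0$, required for membership in $\mathcal{SQ}$, comes essentially for free once $\alpha < 2$, because a $\mathscr{C}^2$-graph would satisfy $|\varphi_{j^{*}}(x)| = O(\|x\|^2)$ and so could not exhibit a leading exponent strictly below $2$.
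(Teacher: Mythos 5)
Your argument is correct and follows essentially the same route as the paper: sandwich the height function $g$ between $\max_j g_j$ and a constant multiple of it (you via the $\ell^2$--$\ell^\infty$ comparison, the paper via $\ell^1$), then use o-minimality to single out a dominant component $g_{j^*}$ that inherits a leading term $a_{j^*}\varepsilon^{\alpha}$ with the same exponent $\alpha\in(0,2)$. The graph presentation via $\mathscr{C}^1$-regularity and the explicit appeal to the monotonicity theorem are cosmetic variants of the paper's choice of an index $j$ with $g_j\ge g_i$ near $0$; like the paper, you treat the converse implication as essentially immediate.
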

\begin{proof}
The sufficiency of the condition is self-evident; it is enough to remark that  $g_j(\varepsilon)\leq g(\varepsilon)$ holds for every $\varepsilon\geq 0.$

Let us prove the necessity of the condition. Assume $X$ to be superquadratic at the origin. Inferring from the position of the tangent cone of $X$, each of the projections $\pi_i(X)$ forms a $\mathcal{C}^1$-submanifold in a neighbourhood of the origin, so we only need to estimate the growth of $y_i$-the $(k+i)$-th coordinate of points in $X$.
We can write
\[\lim_{\varepsilon\to 0^+}g(\varepsilon)/\varepsilon^2=\lim_{\varepsilon\to 0^+}\left(\max_{(x,y)\in X,\|x\|=\varepsilon}\|y\|\right)/\varepsilon^2=+\infty\] 
Now, for every $i$, a projection $\pi_i(X)$ is a definable set, and \[\max_{(x,y)\in X,\|x\|=\varepsilon}|y_i|=\max_{(x,y_i)\in \pi_{i}(X),\|x\|=\varepsilon}|y_i|.\] 
At the same time, \[\max_{(x,y)\in X,\|x\|=\varepsilon}\|y\|\leq \max_{(x,y)\in X,\|x\|=\varepsilon}|y_1|+\ldots+|y_{n-k}|\leq \] \[\leq\max_{(x,y)\in X,\|x\|=\varepsilon}|y_1|+\ldots+\max_{(x,y)\in X,\|x\|=\varepsilon}|y_{n-k}|.\]
Moreover, each $g_i(\varepsilon):=\max_{(x,y)\in X,\|x\|=\varepsilon}|y_i|$ is a definable function defined in a neighbourhood of the origin. Thus, for a neighbourhood small enough, there exists $j$ such that $g_j\geq g_i$. We can estimate

\[\max_{(x,y)\in X,\|x\|=\varepsilon}|y_1|+\ldots+\max_{(x,y)\in X,\|x\|=\varepsilon}|y_{n-k}|\leq(n-k)\max_{(x,y)\in X,\|x\|=\varepsilon}|y_j|. \]

In other words, we have 
\[g(\varepsilon)/\varepsilon^{2}\leq (n-k) g_j(\varepsilon)/\varepsilon^{2}.\]
Since $g(\varepsilon)/\varepsilon^{2}$ tends to infinity as $\varepsilon\to 0$, the function $g_j(\varepsilon)/\varepsilon^{2}$ has to behave alike. 
\end{proof}

\begin{rem}
Observe that the last inequality of Proposition~\ref{Superqadratic projection} can be transformed into 
\[g(\varepsilon)\varepsilon^{-\eta}\leq (n-k) g_j(\varepsilon)\varepsilon^{-\eta}\leq (n-k)g(\varepsilon)\varepsilon^{-\eta}\]
and, since $g_j(\varepsilon)\varepsilon^{-\eta}$ is a definable function, it has a limit as $\varepsilon\to 0^+$. If $g(\varepsilon)=\alpha\varepsilon^\eta+o(\varepsilon^\eta)$ with $\alpha>0$, then the limit $\lim_{\varepsilon\to 0^+} g_j(\varepsilon)\varepsilon^{-\eta}$ has to be positive and finite due to estimations above. Hence $g_j(\varepsilon)$ has to be of the form $\tilde{\alpha}\varepsilon^{\eta}+o(\varepsilon^{\eta})$ for a certain $\tilde{\alpha}>0$ as well.
\end{rem}

\begin{cor}
Let $X$ be superquadratic at $0$ and let $\dim_0 X=k$, then there exists a neighbourhood $U$ of the origin and $(k+1)$-dimensional vector subspace $L$ containing $T_0X$ such that an orthogonal projection of $X\cap U$ on $L$ is superquadratic at the origin. 
\end{cor}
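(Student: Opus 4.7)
The plan is to extract this corollary directly from the preceding proposition by first rotating coordinates so as to fit into its hypotheses, then re-expressing the conclusion in a coordinate-free manner.

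Concretely, I would choose an orthogonal transformation $\Phi:\mathbb{R}^n\to\mathbb{R}^n$ such that $\Phi(T_0X)=\mathbb{R}^k\times\{0\}^{n-k}$. Because $\Phi$ is an isometry fixing the origin, it preserves both the Euclidean norm and the orthogonal projection onto $T_0X$, so the defining function $g(\varepsilon)=\max_{\|p(x)\|=\varepsilon}\|x-p(x)\|$ is unchanged when $X$ is replaced by $\Phi(X)$. Thus $\Phi(X)$ is superquadratic at $0$ in the standard coordinate position, and Proposition \ref{Superqadratic projection} produces an index $j\in\{1,\ldots,n-k\}$ such that $\pi_j(\Phi(X))$ is superquadratic at $0$.

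Next, I would consider the $(k+1)$-dimensional coordinate subspace
$$L':=\mathbb{R}^k\times\{0\}^{j-1}\times\mathbb{R}\times\{0\}^{n-k-j},$$
which contains $\mathbb{R}^k\times\{0\}^{n-k}$ and on which the orthogonal projection of $\mathbb{R}^n$ coincides, under the canonical identification $L'\cong\mathbb{R}^{k+1}$, with $\pi_j$. Setting $L:=\Phi^{-1}(L')$ then yields a $(k+1)$-dimensional subspace of $\mathbb{R}^n$ containing $T_0X$, and the orthogonal projection of $X$ onto $L$ is nothing but the $\Phi^{-1}$-pullback of the orthogonal projection of $\Phi(X)$ onto $L'$, hence superquadratic at $0$ by the conclusion of the proposition. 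Finally, since superquadracity is a property of the germ of the set at the origin, one may restrict to $X\cap U$ for any neighbourhood $U$ of the origin small enough that the asymptotic expansion of the projected $g$ is valid.

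There is no substantive obstacle in this argument; the corollary is essentially a coordinate-free repackaging of Proposition \ref{Superqadratic projection}. The only point meriting verification is the invariance of the function $g$ under the initial orthogonal change of coordinates, which follows immediately from the fact that orthogonal maps are isometries and commute with orthogonal projection onto any subspace they respect.
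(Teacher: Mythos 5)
Your argument is correct and is exactly the deduction the paper intends: the corollary is stated without proof as an immediate consequence of Proposition~\ref{Superqadratic projection}, and your rotation-plus-identification of $\pi_j$ with the orthogonal projection onto the coordinate subspace $L'$ supplies precisely the missing routine details. No discrepancy with the paper's (implicit) reasoning.
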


Proposition~\ref{Superqadratic projection} allows studying the superquadracity in a purely geometric manner thanks to the next result.

\begin{prop}\label{Superquadratic interestion with balls}
Let $X$ be a definable subset of $\mathbb{R}^n$. Assume that $0\in\mathcal{SQ}(X)$, then there exists a vector $v\in V_0$ such that for all $r>0$, the intersection $\mathbb{B}(rv,r)\cap X$ is not empty.
\end{prop}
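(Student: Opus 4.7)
The plan is to extract the desired normal direction $v$ from a definable curve in $X$ that witnesses the superquadratic growth, and then to verify that this curve eventually enters every ball $\mathbb{B}(rv,r)$.

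First I would fix coordinates so that $T_0X=\mathbb{R}^k\times\{0\}^{n-k}$, and write points of $\mathbb{R}^n$ as pairs $(x,y)$ with $x\in\mathbb{R}^k$, $y\in\mathbb{R}^{n-k}$. In these coordinates $V_0=N_0X\cap\mathbb{S}$ is the unit sphere of $\{0\}^k\times\mathbb{R}^{n-k}$, and the superquadratic hypothesis becomes
$$g(\varepsilon)=\max_{(x,y)\in X,\,\|x\|=\varepsilon}\|y\|=a\varepsilon^{\alpha}+o(\varepsilon^\alpha)$$
for some $\alpha\in(0,2)$ and $a>0$. Applying definable choice to the definable family of pairs $(\varepsilon,(x,y))$ realising this maximum, I would produce a definable curve $\gamma(\varepsilon)=(x(\varepsilon),y(\varepsilon))\in X$, defined for $\varepsilon\in(0,\delta)$, with $\|x(\varepsilon)\|=\varepsilon$ and $\|y(\varepsilon)\|=g(\varepsilon)$.

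Next I would exhibit the candidate direction. The curve $\varepsilon\mapsto v(\varepsilon):=(0,y(\varepsilon)/\|y(\varepsilon)\|)$ is definable and takes values in the compact set $V_0$, so by the o-minimal monotonicity theorem applied coordinate-wise it admits a limit $v:=\lim_{\varepsilon\to 0^+}v(\varepsilon)\in V_0$. For fixed $r>0$, the inclusion $\gamma(\varepsilon)\in\mathbb{B}(rv,r)$ rewrites as
$$\varepsilon^2+\|y(\varepsilon)\|^2\leq 2r\langle y(\varepsilon),v\rangle=2r\|y(\varepsilon)\|\,\langle v(\varepsilon),v\rangle.$$
Since $\langle v(\varepsilon),v\rangle\to 1$, this reduces, for $\varepsilon$ small, to $\varepsilon^2/\|y(\varepsilon)\|+\|y(\varepsilon)\|\leq r$. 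The left-hand side behaves like $\varepsilon^{2-\alpha}/a+a\varepsilon^\alpha$, which tends to $0$ precisely because $\alpha\in(0,2)$; hence $\gamma(\varepsilon)\in X\cap\mathbb{B}(rv,r)$ for all sufficiently small $\varepsilon$.

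The only genuinely delicate step is the existence of the limit direction $v$: o-minimality supplies it via definable selection and the monotonicity theorem, whereas without definability one could only pass to a convergent subsequence and lose a canonical choice. The rest of the argument is a transparent comparison of exponents, and the inequality $\alpha<2$ is exactly what is required so that the quadratic error terms $\varepsilon^2$ and $\|y(\varepsilon)\|^2$ are dominated by the superquadratic main term $\sim a\varepsilon^\alpha$ in the above reduction, justifying the \emph{superquadratic} terminology.
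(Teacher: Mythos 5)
Your proof is correct, but it takes a genuinely different route from the paper. The paper first reduces to codimension one: it invokes Proposition~\ref{Superqadratic projection} to find a coordinate projection $\pi_j(X)\subset\mathbb{R}^{k+1}$ that is still superquadratic, handles the hypersurface case by the observation that the lower hemisphere of $\mathbb{S}^{n-1}$ is the graph of a quadratic function, and then lifts the conclusion back to $X$ by intersecting with a ball of \emph{half} the radius and running an explicit chain of inequalities (which needs the auxiliary local estimate $\sum x_i^2>\sum y_i^2$ coming from $T_0X=\mathbb{R}^k\times\{0\}^{n-k}$). You instead work directly in $\mathbb{R}^n$: definable choice gives a curve $\gamma(\varepsilon)$ realising $g(\varepsilon)$, the monotonicity theorem gives the limit direction $v=\lim_{\varepsilon\to0^+}(0,y(\varepsilon)/\|y(\varepsilon)\|)\in(T_0X)^\perp\cap\mathbb{S}=V_0$, and the membership $\gamma(\varepsilon)\in\mathbb{B}(rv,r)$ reduces to $\varepsilon^2/\|y(\varepsilon)\|+\|y(\varepsilon)\|\le 2r\langle v(\varepsilon),v\rangle$, whose left side is $\sim\varepsilon^{2-\alpha}/a+a\varepsilon^\alpha\to0$ exactly because $\alpha\in(0,2)$. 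Your argument is shorter, bypasses the projection lemma entirely, produces a canonical $v$ rather than a basis vector of a reflected coordinate system, and isolates the role of the exponent bound more transparently; the paper's route has the side benefit of establishing the projection proposition, which it reuses elsewhere (e.g.\ in Theorem~\ref{Superqadratic curve}). One remark worth making explicit: as literally stated with closed balls the proposition is vacuous, since $0\in X\cap\mathbb{B}(rv,r)$ always; what the subsequent Theorem~\ref{superquadratic theorem} actually needs is a point of $X$ at distance \emph{strictly} less than $r$ from $rv$ (so that $rv\notin\mathcal{N}(0)$). Your estimate delivers this stronger conclusion, since for small $\varepsilon$ you get $\varepsilon^2/\|y(\varepsilon)\|+\|y(\varepsilon)\|<r<2r\langle v(\varepsilon),v\rangle$ and hence strict inequality, with $\gamma(\varepsilon)\ne0$ because $\|x(\varepsilon)\|=\varepsilon>0$; it would be worth stating the final inequality strictly rather than with $\le$.
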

\begin{proof}
The proposition is obvious for $\text{codim } X=1$ since the lower hemisphere of $r\mathbb{S}^{n-1}$ is not superquadratic for any $r>0$. Indeed, take \[\varphi:\mathbb{B}(0,r)\ni x\to (x,r-\sqrt{r^2-\|x\|^2})\in\mathbb{R}^{n},\] then $\lim_{x\to 0}\varphi_{n}(x)/\|x\|^2= 1/(2r)<\infty$. It is plain to see that some points of superquadratic hypersurface $X$ with $T_0X=\mathbb{R}^{n-1}\times{0}$ has to lie in one of the balls $\mathbb{B}((0,\pm rv),r)$ for all $r>0,v\in V_0$.

Assume now that $\text{codim } X>1$. Without loss of generality we can take $T_0X=\mathbb{R}^k\times\lbrace 0\rbrace^{n-k}$, then there must be $\sum_{i=1}^k x_{i}^{2}>\sum_{i=1}^{n-k} y_{i}^2$ for points in a certain neighbourhood of the origin.
Since the notion of superquadracity is local, we can assume that the inequality holds for all points of $X$.

While $\text{codim } X>1$, there exists $i\in\lbrace1,\ldots,\text{codim } X\rbrace$ such that $\pi_i(X)$ is superquadratic at the origin due to Proposition~\ref{Superqadratic projection}. 
We will show that for $v=e_{k+i}$ - the $(k+i)$th vector from the canonical basis of $\mathbb{R}^n$, the assertion holds. Take $r>0$; we need to prove that there exists a point $\xi\in\mathbb{B}(rv,r)\cap X$. 

Since $\pi_i(X)$ is superquadratic, there exists $\tilde{v}\in V_0\pi_i(X)=\lbrace 0\rbrace^{k}\times\{-1,1\}$ such that for all $\tilde{r}>0$, the intersection $\mathbb{B}(\tilde{r}\tilde{v},\tilde{r})\cap \pi_i(X)$ is not empty. Reflecting $X$, if necessary, we can assume $\tilde{v}=(0,\ldots,0,1)$.
By taking $\tilde{r}=\frac{1}{2}r$, we get the nonemptiness of $\mathbb{B}(\frac{1}{2}r\tilde{v},\frac{1}{2}r)\cap \pi_{i}(X)$. In other words, we have found such $(x_1,\ldots,x_{k},y_{i})\in \pi_i(X)$ that an inequality~$\sum_{j=1}^k x_j^2+(\frac{1}{2}r-y_{i})^2<\frac{1}{4}r^2$ holds.

For any $(x,y)\in \pi_i^{-1}((x_1,\ldots,x_{k},y_{i}))\cap X$, we compute
\[\|(x,y)-rv\|^2=\sum_{j=1}^{k} x_{j}^{2} + \sum_{j=1,j\neq i}^{n-k} y_{j}^2 +(y_{i}-r)^2<\]
\[<2\sum_{j=1}^{k} x_{j}^{2}+(y_{i}-\frac{1}{2}r-\frac{1}{2}r)^2=2\sum_{j=1}^{k} x_{j}^{2}+(y_{i}-\frac{1}{2}r)^2-y_{i} r+\frac{3}{4}r^2=\]

\[=2\sum_{j=1}^{k} x_{j}^{2}+2(y_{i}-\frac{1}{2}r)^2-(y_{i}-\frac{1}{2}r)^2-y_{i}r+\frac{3}{4}r^2<\]
\[<\frac{1}{2}r^2-y_{i}^2+y_{i}r-\frac{1}{4}r^2-y_{i}r+\frac{3}{4}r^2<r^2.\]

Thus, $(x,y)\in\mathbb{B}(rv,r)\cap X$ what had to be shown.

\end{proof}

The main result on the set of $\mathscr{C}^2$-singularities of $X$ is the following partial characterisation.

\begin{thm}\label{superquadratic theorem}
Let $X$ be a definable subset of $\mathbb{R}^n$ such that $0\in Sng_2 X\cap Reg_1 X$. Then $0\in\overline{M_X}$ if $0\in\mathcal{SQ}(X)$.
\end{thm}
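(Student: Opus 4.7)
The plan is to use Proposition~\ref{Superquadratic interestion with balls} to fix an obstructing normal direction at $0$, and then to transfer the obstruction to nearby regular points of $X$ by a limit argument. By (the proof of) Proposition~\ref{Superquadratic interestion with balls} we obtain $v \in V_0$ with the property that for every $t > 0$ the open ball $\mathbb{B}^{\circ}(tv, t)$ contains a point of $X$ distinct from $0$; equivalently $d(tv, X) < t$, so $0 \notin m(tv)$ for any $t > 0$.

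I would then pick, for each small $t > 0$, a closest point $\xi_t \in m(tv)$. Since $0 \in Reg_1 X$ and $Reg_1 X$ is open in $X$, one has $\xi_t \in Reg_1 X$ for small $t$, so Proposition~\ref{Properties}(2) forces $v_t := (tv - \xi_t)/\|tv - \xi_t\|$ to lie in $V_{\xi_t}$. The first technical step is to check that $\xi_t \to 0$ and $v_t \to v$. Writing $\xi_t = (x(t), y(t))$ in coordinates adapted to $T_0 X = \mathbb{R}^k \times \{0\}^{n-k}$, the $\mathscr{C}^1$ graph description at $0$ gives $\|y(t)\| = o(\|x(t)\|)$; combining this with the inequality $\|\xi_t - tv\|^2 < t^2$ yields $\|x(t)\| = o(t)$, hence $\xi_t = o(t)$, and a short computation then gives $v_t = v + o(1)$.

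The heart of the argument is the claim that $r_{v_t}(\xi_t) \to 0$ as $t \to 0^+$. I would argue by contradiction: if some subsequence satisfies $r_{v_{t_n}}(\xi_{t_n}) \geq \delta > 0$, then by the definition of the directional reaching radius together with the closedness of the graph of $m$, the open ball $\mathbb{B}^{\circ}(\xi_{t_n} + \delta v_{t_n}, \delta)$ is disjoint from $X$ for every $n$. Applying Proposition~\ref{Superquadratic interestion with balls} at radius $\delta$ produces $\eta \in X$ with $\|\eta - \delta v\| < \delta$, and since $\xi_{t_n} + \delta v_{t_n} \to \delta v$ this strict inequality passes to $\|\eta - (\xi_{t_n} + \delta v_{t_n})\| < \delta$ for large $n$---a contradiction.

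Finally, the point $p_t := \xi_t + r_{v_t}(\xi_t) v_t$ lies in $\overline{M_X}$: one has $\xi_t \in m(p_t)$ by closedness of the graph of $m$, while by maximality of $r_{v_t}(\xi_t)$ one can find, arbitrarily close to $p_t$, points on the normal ray with a strictly closer $X$-competitor than $\xi_t$, so $p_t$ is either in $M_X$ itself or an accumulation point of $M_X$. Since $p_t \to 0$, this delivers $0 \in \overline{M_X}$. I expect the contradiction step to be the main obstacle, and it hinges crucially on Proposition~\ref{Superquadratic interestion with balls} placing $\eta$ in the strict interior of $\mathbb{B}(\delta v, \delta)$: this openness is exactly what survives the perturbation of the centre from $\delta v$ to $\xi_{t_n} + \delta v_{t_n}$.
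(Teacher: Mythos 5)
Your argument is essentially correct, but it follows a genuinely different route from the paper's. The paper also starts from Proposition~\ref{Superquadratic interestion with balls}, but only to record the obstruction $\mathcal{N}(0)\cap\mathbb{R}_+v=\lbrace 0\rbrace$; it then argues by contradiction, using the density of $Reg_2X$, the Nash Lemma (to guarantee that univalued normal sets of nearby $\mathscr{C}^2$-points contain normal vectors of length $\varepsilon=d(0,\overline{M_X})/2$), the upper semicontinuity $\limsup\mathcal{N}'(x)\subset\mathcal{N}'(0)$ from Proposition~\ref{Properties}, and the continuity of normal spaces at the $\mathscr{C}^1$-point $0$ to manufacture a nonzero element of $\mathcal{N}(0)\cap\mathbb{R}_+v$. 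You instead work directly on the ray $\mathbb{R}_+v$: you take foot points $\xi_t\in m(tv)$, verify $\xi_t=o(t)$ and $v_t\to v$ (your computation via $\|y(t)\|=o(\|x(t)\|)$ and $\|\xi_t-tv\|\le t$ is fine), and show that the directional reaching radii $r_{v_t}(\xi_t)$ collapse, the contradiction step correctly exploiting the \emph{strict} inequality produced in the proof of Proposition~\ref{Superquadratic interestion with balls}. Your approach buys independence from the Nash Lemma and from the density of $Reg_2X$; the paper's buys a shorter finish, since the contradiction lands directly on a statement about $\mathcal{N}(0)$.

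The one step you should shore up is the last one: the inference from ``points on the normal ray arbitrarily close to $p_t$ have a competitor strictly closer than $\xi_t$'' to ``$p_t\in\overline{M_X}$'' is exactly the nontrivial statement that centres of maximal inscribed balls lie in the closure of the medial axis (Matheron's skeleton theorem), and your one-sentence justification does not prove it --- upper semicontinuity of $m$ alone does not rule out that the nearby feet $q_j$ simply converge back to $\xi_t$ without $m$ ever becoming multivalued. The fact is true (e.g.\ via the $\mathscr{C}^1$-regularity of the distance function off $\overline{M_X}$ and the gradient-flow argument forcing the normal segment to extend past $p_t$), but it needs either a proof or a citation. Alternatively, you can bypass it entirely inside this paper's framework: your third step shows $\xi_t\to 0$, $V_{\xi_t}\ni v_t\to v$ and $r_{v_t}(\xi_t)\to 0$, hence $\tilde r_v(0)=0$ and $r(0)=0$, and the equivalence between $x_0\in\overline{M_X}$ and the reaching radius failing to be separated from zero (invoked in the proof of Theorem~\ref{lepszy motzkin}) then yields $0\in\overline{M_X}$ directly.
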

\begin{proof}
Denote $k:=\dim_0X$. As was mentioned earlier, we can rotate the coordinate system to obtain $T_0X=\mathbb{R}^k\times\lbrace 0\rbrace^{n-k}$. Then, by Proposition~\ref{Superquadratic interestion with balls}, there exists such $v\in V_0$ that for all $r>0$ the intersection $\mathbb{B}(rv,r)\cap X$ is nonempty. In terms of the normal set it means  \[\mathcal{N}(0)\cap \mathbb{R}_+v=\lbrace 0\rbrace.\]

Since $Reg_1 X$ is an open set in $X$, there exists a neighbourhood of the origin $U$ such that  $U\cap X$ forms a $k$-dimensional $\mathscr{C}^1$-submanifold of $\mathbb{R}^n$. Consequently, we have \[\dim T_x X^{\perp} +\dim (T_0(X)+\mathbb{R}v)>n\text{, for all }x\in U\cap X,\] and therefore, for $x\in U\cap X$, a linear space $V(x):=T_x X^{\perp}\cap (T_0(X)+\mathbb{R}v)$ has to be at least one-dimensional.

Assume now that $0\notin \overline{M_X}$ and take $\varepsilon=d(0,\overline{M_X})/2$. Following the equality from Proposition~\ref{Properties}$(3)$ for $x\in \mathbb{B}(0,\varepsilon)\cap U$, an univalued normal set $\mathcal{N}'(x)$ contains a sphere of radius $\varepsilon$ and dimension $(n-k)$. Take then a sequence $x_\nu\in \mathbb{B}(0,\varepsilon)\cap U$ with $x_\nu\rightarrow 0$ and denote by $s_\nu$ an element of $V(x)\cap\varepsilon\mathbb{S}$ for which the scalar product $<s_\nu,v>$ is positive. By choosing a subsequence, we can assume now that \[s_\nu\to s\in \varepsilon\mathbb{S}\text{ and }\mathcal{N}'(x_\nu)\ni x_\nu+s_\nu\rightarrow 0+ s.\]
Hence, $s\in \limsup_{X\ni x\rightarrow 0}\mathcal{N}'(x)$. Since the upper limit of $\mathcal{N}'(x)$ is a subset of $\mathcal{N}(0)$ by proposition~\ref{Properties} we have $s\in \mathcal{N}(0)$.

Moreover, since the origin is a point of the $\mathscr{C}^1$-regularity of $X$, the normal spaces $T_{x_\nu}X^\perp$ converge to the normal space at the origin. Consequently, the choice of $s_\nu$ guarantees that $s\in \mathbb{R}v$.

Therefore $s$ has to be a point of $\mathcal{N}(0)\cap\mathbb{R}v$. Since $s$ has a nonnegative scalar product with $v$ and a positive norm, we conclude that $\mathcal{N}(0)\cap \mathbb{R}_+v$, which ends in a contradiction.
\end{proof}

Theorem~\ref{superquadratic theorem} can be inverted if $X$ is a curve.

\begin{thm}\label{Superqadratic curve}
Under the assumptions of the previous theorem, if $X$ is a definable curve 
and $0\in\overline{M_X}\cap Reg_1X$, then $0\in \mathcal{SQ}(X)$. 
\end{thm}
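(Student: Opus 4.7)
I would argue by contraposition: assume $0 \notin \mathcal{SQ}(X)$ and deduce $0 \notin \overline{M_X}$. First, rotate so that $T_0 X = \mathbb{R} \times \{0\}^{n-1}$; because $0 \in Reg_1 X$ and $\dim X = 1$, a neighbourhood of $0$ in $X$ is parametrised by $\gamma(t) = (t, \phi(t))$ for a definable $\mathscr{C}^1$ map $\phi \colon (-\varepsilon_0, \varepsilon_0) \to \mathbb{R}^{n-1}$ with $\phi(0) = 0$ and $\phi'(0) = 0$. In a polynomially bounded structure the definable function $g(\varepsilon) = \max_{|t| = \varepsilon} \|\phi(t)\|$ admits a Puiseux-type asymptotic $a\varepsilon^\alpha + o(\varepsilon^\alpha)$ with $a > 0$ and $\alpha > 1$, so the negation of $0 \in \mathcal{SQ}(X)$ is precisely $\alpha \geq 2$, giving $\|\phi(t)\| \leq C t^2$ for small $|t|$.

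Next I would upgrade this pointwise inequality to a uniform $\mathscr{C}^{1,1}$ estimate. Component-wise differentiation of the Puiseux asymptotic is legitimate in a polynomially bounded structure, so $\|\phi'(t)\| \leq C' |t|$; iterating the argument on the definable function $\phi'$ (which is $\mathscr{C}^1$ off a finite set by o-minimal $\mathscr{C}^k$-cell decomposition) produces a bound $\|\phi''(t)\| \leq L$ on $(-\delta,\delta)\setminus\{0\}$. Combined with $\phi'(0) = 0$ and the continuity of $\phi'$ at $0$, this makes $\phi'$ globally Lipschitz on $(-\delta,\delta)$, and the standard Taylor remainder gives
\begin{equation*}
  \|\phi(t) - \phi(a) - (t-a)\phi'(a)\| \leq \tfrac{L}{2}(t-a)^2
\end{equation*}
uniformly for $t, a \in (-\delta, \delta)$.

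Finally I would replay the computation used in the proof of Proposition~\ref{Superquadratic interestion with balls}, but anchored at a variable base point $\gamma(a)$. For a unit $v = (v_1, v_y) \in T_{\gamma(a)}X^\perp$ the orthogonality forces $v_1 = -\langle v_y, \phi'(a)\rangle$, hence
\begin{equation*}
  \langle \gamma(t) - \gamma(a), v \rangle = \langle v_y, \phi(t) - \phi(a) - (t-a)\phi'(a) \rangle \leq \tfrac{L}{2}(t-a)^2 .
\end{equation*}
If $\gamma(t) \in \mathbb{B}(\gamma(a) + r v, r)$, expanding $\|\gamma(t) - \gamma(a) - r v\|^2 \leq r^2$ yields $(t-a)^2 + \|\phi(t) - \phi(a)\|^2 \leq r L (t-a)^2$, which for $r < 1/L$ forces $t = a$. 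Thus $\mathbb{B}(\gamma(a) + r v, r) \cap X = \{\gamma(a)\}$ uniformly in $a$ near $0$ and in the unit normal $v$, so every point near $0$ lying on a normal of length less than $1/L$ to a nearby smooth point has a unique closest point on $X$; a neighbourhood of $0$ is therefore disjoint from $M_X$, contradicting $0 \in \overline{M_X}$.

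\textbf{Main obstacle.} The delicate step is the $\mathscr{C}^{1,1}$ upgrade in the second paragraph. The pointwise quadratic bound $\|\phi(t)\| = O(t^2)$ at the single point $0$ is by itself insufficient, as an oscillatory $t^2 \sin(1/t)$ would satisfy it while destroying Lipschitzness of $\phi'$; it is precisely polynomial boundedness that rules out such behaviour, and precisely one-dimensionality that allows iterating the $\mathscr{C}^k$-cell decomposition to control $\phi''$. Pushing the theorem beyond curves would stall exactly at this step, which is consistent with the author's remark on the greater complexity of the higher-dimensional situation.
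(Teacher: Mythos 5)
Your proposal is correct and follows essentially the same route as the paper: negate superquadracity, use polynomial boundedness to get Puiseux expansions of the branch components with exponents at least $2$, upgrade to second-order (here $\mathscr{C}^{1,1}$) regularity of each branch through the origin, and conclude by rolling a ball of uniform radius along the normals. The only difference is one of presentation — the paper reduces to the components via Proposition~\ref{Superqadratic projection} and cites the rolling-ball property of $\mathscr{C}^2$ graphs, whereas you carry out the tangent-ball estimate explicitly; the mathematical content is the same.
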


\begin{proof}
Since $0\in Reg_1 X$, we can describe $X$ in a certain neighbourhood $U$ of the origin as a graph of a definable function \[F:(-\varepsilon,\varepsilon)\ni x\rightarrow (f_1(x),\ldots,f_{n-1}(x))\in\mathbb{R}^{n-1}\] smooth on $(-\varepsilon,\varepsilon)\backslash\{0\}.$ Divide $F$ into branches $F_-:=F|(-\varepsilon,0)$ and $F_+:=F|(0,\varepsilon)$, and denote their components by $f_{i,-},f_{i,+}$, respectively. If $X$ is not superquadratic at the origin, then, according to Proposition~\ref{Superqadratic projection}, neither any of $f_{i,\pm}$ can be. That means every $f_{i,\pm}$ has a finite limit $\lim_{x\to 0}f_{i,\pm}(x)/x^2$ due to the structure's o-minimality. 
Therefore every $f_{i,\pm}$ admits a $\mathscr{C}^2$-extension through $0$. Accordingly, both $F_-$ and $F_+$ can be extended to $\mathscr{C}^2$-functions through the origin.  Now, since $\mathscr{C}^2$ submanifolds are disjoint with their medial axes, we can find fixed radii $r_-,r_+$ such that a ball of radius $r_\pm$  centred at $x+vr_\pm$ is disjoint with the graph $\Gamma_{F_\pm}$ for any $x\in \Gamma_{F_\pm}, \, v\in V_x$. The smaller radius holds the property for the whole $X\cap U$, thus $X\cap\overline{M_X}=\emptyset$.
\end{proof}

Naturally, $\overline{M_X}$ is a closed set; thus, an easy corollary follows.

\begin{cor}\label{closure of superquadratic}
For a closed definable set $X$, there is \[\overline{\mathcal{SQ}(X)}\subset \overline{M_X}\cap Reg_1X.\] Moreover, if $\dim X=1$ 
, the set $\mathcal{SQ}(X)$ is discrete, and the inclusion becomes equality.
\end{cor}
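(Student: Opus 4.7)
The plan is to combine Theorem~\ref{superquadratic theorem} with Theorem~\ref{Superqadratic curve}, and, in the curve case, to exploit the zero-dimensionality of $Sng_2X\cap Reg_1X$.

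For the general inclusion, I would note that $\mathcal{SQ}(X)\subset Reg_1X$ holds by definition and $\mathcal{SQ}(X)\subset\overline{M_X}$ by Theorem~\ref{superquadratic theorem}, so $\mathcal{SQ}(X)\subset \overline{M_X}\cap Reg_1X$. The right-hand side is closed in $Reg_1X$, which is the natural ambient space for $\mathcal{SQ}(X)$, so taking closure yields the first assertion.

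For $\dim X=1$, the first step is to verify that $\mathcal{SQ}(X)$ is closed. Since $\mathcal{SQ}(X)\subset Sng_2X\cap Reg_1X$ and the latter is a proper definable subset of the one-dimensional set $X$, o-minimality forces it to be zero-dimensional, hence discrete. The only delicate point is to exclude accumulation of $\mathcal{SQ}(X)$ on $Sng_1X$; but in a polynomially bounded structure every branch of a definable curve admits a Puiseux-like expansion and is therefore $\mathscr{C}^\infty$ off its base point, so points of $X$ close to but distinct from a $Sng_1X$ singularity lie in $Reg_2X$ and cannot belong to $\mathcal{SQ}(X)$. A convergent sequence from $\mathcal{SQ}(X)$ is thus eventually constant, and $\mathcal{SQ}(X)$ is closed.

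The reverse inclusion is then quick: given $a\in\overline{M_X}\cap Reg_1X$, the Nash Lemma rules out $a\in Reg_2X$ (the $\mathscr{C}^2$-smooth part being disjoint from $\overline{M_X}$), placing $a$ in $Sng_2X\cap Reg_1X$, whereupon Theorem~\ref{Superqadratic curve} yields $a\in\mathcal{SQ}(X)$. Combined with the first inclusion and the closedness of $\mathcal{SQ}(X)$, this gives $\overline{\mathcal{SQ}(X)}=\mathcal{SQ}(X)=\overline{M_X}\cap Reg_1X$. The main obstacle I anticipate is precisely this closedness step: ruling out accumulation on $Sng_1X$ rests essentially on the Puiseux-type regularity of curve branches guaranteed by polynomial boundedness, without which superquadratic points could in principle creep up on a non-$\mathscr{C}^1$ point.
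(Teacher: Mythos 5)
Your proposal is correct and follows exactly the route the paper intends: the paper's own proof is literally ``Immediate from Theorem~\ref{superquadratic theorem} and Theorem~\ref{Superqadratic curve}'', and you supply precisely the missing details (the forward inclusion from Theorem~\ref{superquadratic theorem}, the Nash Lemma to place points of $\overline{M_X}\cap Reg_1X$ in $Sng_2X$, and Theorem~\ref{Superqadratic curve} for the converse on curves). Your closedness argument could be shortened --- $\mathcal{SQ}(X)\subset Sng_2X$ is a zero-dimensional definable set, hence finite and closed --- but what you wrote is sound.
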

\begin{proof}
The first part is immediate from Theorem~\ref{superquadratic theorem}. The second follows Theorem~\ref{Superqadratic curve} and the fact $\mathcal{SQ}(X)\subset Sng_2X$ the latter set being discrete.
\end{proof}

Corollary \ref{closure of superquadratic} gives full information about $\overline{M_X}\cap Reg_1 X$ when $X$ is a subset of a plane. The problem becomes more complicated in higher dimensions, and Theorem~\ref{superquadratic theorem} cannot be reversed.

\begin{ex}\label{kontrsuperq}
First, observe that for a cone $C=\lbrace z^2=x^2+y^2\rbrace$ and a point $p$ with $x,y$ coordinates equal to zero, $m(p)$ is a full circle parallel to the XY plane, and $\overline{ M_C\cap\lbrace x=y=0\rbrace}\cap C=\lbrace 0\rbrace$.
Next, let us consider $X$ - a graph of a function 
\[ f(x,y)=\begin{cases}
        \frac{y^2}{x}, &\quad |y|<x^3, x>0\\
        2x^2|y|-x^5, &\quad |y|\geq x^3, x>0\\
        0, &\quad x\leq 0\\
\end{cases}.
\]
One can check that $f$ is $\mathscr{C}^1$ smooth, and its graph is a semialgebraic set not superquadratic at any point of $\mathbb{R}^3$. A part of the graph above the region $|y|<x^3$ is a part of the cone $C$ rotated in such a manner that the $x$-axis belongs to it. Every point $(x,0)$ belongs to $|y|<x^3$ with some neighbourhood. Therefore neighbourhoods of points $(x,0,0)$ in the graph are parts of the rotated cone $C$. Consequently, for any $x>0$, we have $(x,0,x)=\mathcal{N}((x,0,0))\backslash \mathcal{N}'((x,0,0))\in M_X$. Taking any sequence $x_\nu\rightarrow 0$, we obtain $0\in \overline{M_X}\cap X$.
\end{ex}
\begin{figure}
    \centering
    \includegraphics[width=0.8\textwidth]{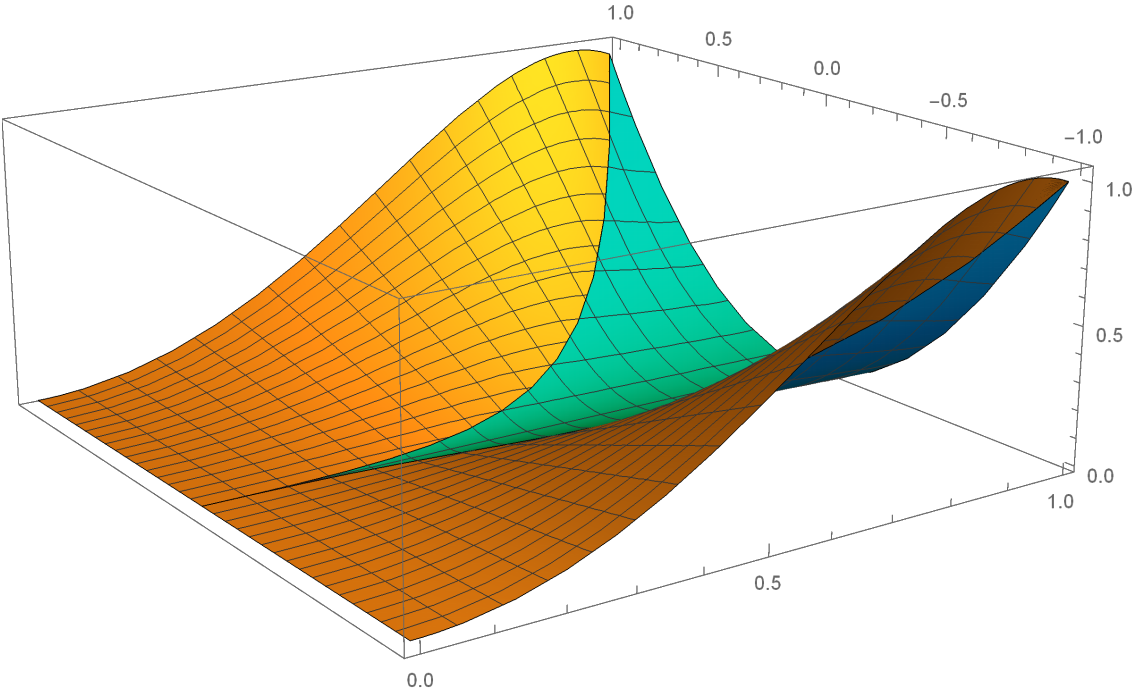}
    \caption{The graph from Example~\ref{kontrsuperq}}
    \label{fig:counter}
\end{figure}

\section{C1-singular case}

Findings of Rataj and Zaj{\'i}cek \cite{Rataj} state that every closed topological manifold with a positive reach has to be a $\mathscr{C}^{1,1}$-manifold (that is, a manifold admitting a parametrisation with Lipschitz derivative). Conversely, we can say that every point of a closed topological manifold in which the differentiability class is lower than $\mathscr{C}^1$ has to be approached by the manifolds' medial axis. Though, the regularity class postulated above is by no means necessary as we could see in the previous section. This section answers the extent to which we can loosen the assumption of the set being a manifold.

At start, it is worth noting that Motzkin Theorem~\cite{Motzkin} combined with the result from \cite{BirbrairDenkowski} (stating that $M_{C_0X}\subset C_0M_X$ for any definable $X\ni 0$) gives an appealing and fast 
\begin{thm}{(Tangent cone criterion)}\label{tangent cone criterion}
Let $X$ be definable. Take a point $x\in X$ and assume that $C_xX$ is nonconvex. Then $x\in \overline{M_X}$. 
\end{thm}
\begin{proof} 
We can assume $x=0$. Since $C_0X$ is nonconvex, Motzkin Theorem allows finding a point $p\in M_{C_0X}$. Since the medial axis commutes with homotheties, \[\forall t>0:\quad t p\in tM_{C_0X}=M_{tC_0X}=M_{C_0X},\] we have $0\in \overline{M_{C_0X}}$. The inclusion $M_{C_0X}\subset C_0M_X$ finishes the proof.
\end{proof}
Sadly not every point of $Sng_1X$ induces a nonconvex tangent cone (consider, for example, $0\in X:=(-\infty,0]=C_0X$). More than that, the problem remains even if we assume that $C_x X$ is flat (cf. Example~\ref{Ghomi}).

Being aware of high dimensional problems, it is easier now to appreciate the Euclidean plane case tameness. Definable structures assure that a planar set can be decomposed into a finite family of curves and areas between them in the neighbourhood of every point around which it is not $\mathscr{C}^1$-smooth. Using this decomposition, Birbrair and Denkowski presented in \cite{BirbrairDenkowski} a complete metric characterisation of the planar medial axes. One of the possible approaches to subsets of higher dimensional spaces starts with a description of points of a set's $\mathscr{C}^1$-smoothness by their paratangent cones given by F. Bigolin and G. Greco \cite{BigolinGrecko}. Their results give us a slightly more refined tool to analyse the behaviour of a medial axis than the tangent cone one.

\begin{thm}\label{lepszy motzkin}
Let $X$ be a definable closed subset of $\mathbb{R}^k$. If $x_0\notin \overline{M_X}$, then \[C_{x_0}X\subset \liminf_{X\ni x\to x_0} C_{x}X.\]

\begin{proof}
By the tangent cone criterion, whenever $x_0\notin \overline{M_X}$, a neighbourhood of $x_0$ exists where all the tangent cones of $X$ are convex.
Moreover, $x_0\notin \overline{M_X}$ if and only if the reaching radius is separated from zero in some neighbourhood $U$ of $x_0$. Otherwise, $r(x_0)=0$ and consequently, $x_0$ is a point of the medial axis closure. 

Since the directional reaching radius $r_v(x)$ is positive for any direction in $V_x$, the space normal to $X$ at a point $x$ is a cone spanned over the suitably translated normal set $\mathcal{N}(x)$. \[N_xX=\mathbb{R}_{\geq 0}(\mathcal{N}(x)-x).\] 

Recall that $\limsup_{x\to x_0}\mathcal{N}(x)\subset \mathcal{N}(x_0)$. This convergence will not be affected by moving the sets $\mathcal{N}(x)$ by a vector $x$. Furthermore, spanning a cone on a set is inclusion preserving, thus \[\mathbb{R}_{\geq 0}\limsup_{X\ni x\to x_0}(\mathcal{N}(x)-x)\subset \mathbb{R}_{\geq 0}(\mathcal{N}(x_0)-x_0).\] Now, since none of the reaching radii $r_v(x)$ converge to zero, we can move the spanning operation inside the upper limit. Therefore, with the established equivalence between the normal space and the normal set, we can write that
\[\limsup_{X\ni x\to x_0} N_xX\subset N_{x_0}X.\] Next, by taking the normal cone to both sides, we invert the inclusion, obtaining 
\[N_0\limsup_{X\ni x\to x_0} N_xX\supset N_0N_{x_0}X.\]

We will prove now the inclusion \[\liminf_{x\to x_0}N_0N_xX\supset N_0\limsup_{x\to x_0} N_xX.\]
Take a vector $v\in N_0\limsup_{x\to x_0} N_xX$. Since the upper limit of cones is a cone, by definition,  $v$ has to form a nonpositive scalar product with every vector in $\limsup_{x\to x_0} N_xX$. It means that for any sequence of points $v_x\in N_{x} X$, the upper limit $\limsup_{x\to x_0} \langle v,v_x/\|v_x\|\rangle $ is nonpositive. 

Note that, since the normal cone is indeed a cone, we have \[N_0N_xX=\bigcap_{v_x\in N_xX}\lbrace w\in\mathbb{R}^n|\,\langle w,v_x\rangle \leq 0\rbrace.\]
Now, should $v\notin \liminf_{x\to x_0}N_0N_xX$, a sequence $x_\nu\to x_0$ would exists with $N_0N_{x_\nu}X$ separated from $v$. However, it means that for a certain sequence of points $x_\nu\in X$ and normal vectors $v_\nu\in N_{x_\nu}X$, the scalar product $\langle v,v_\nu/\|v_\nu\|\rangle $ stays positive and separated from zero.

Since $N_xX$ and $C_xX$ are convex, $N_0N_xX=C_xX$, and the assertion is just an alternate form of the inclusion
\[\liminf_{X\ni x\to x_0}C_xX=\liminf_{X\ni x\to x_0}N_0N_xX\supset N_0N_{x_0}X=C_{x_0}X.\]

\end{proof}

\end{thm}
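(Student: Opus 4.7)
The plan is to exploit the polar duality between tangent cones and normal cones for convex cones: for a convex cone $C$ one has $C = N_0(N_0 C)$, so the target inclusion $C_{x_0}X \subset \liminf_{X\ni x\to x_0} C_xX$ can be recast as an outer-semicontinuity statement about the normal cones $N_xX$, provided the tangent cones near $x_0$ are already known to be convex. That convexity is precisely the content of the tangent cone criterion recalled at the start of the section: since $x_0\notin \overline{M_X}$, a neighbourhood of $x_0$ consists of points with convex tangent cones, and on this neighbourhood the duality $N_0 N_xX = C_xX$ is available.

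Next I would connect the normal cone $N_xX$ to the normal set $\mathcal{N}(x)$. Because $x_0\notin\overline{M_X}$, the reaching radius $r$ is bounded below by some $\delta>0$ in a neighbourhood of $x_0$, so every normal direction at $x$ is witnessed by a genuine foot-point relation with some element of $\mathcal{N}(x)$ at distance at least $\delta$ from $x$. This should give the identification $N_xX=\mathbb{R}_{\geq 0}(\mathcal{N}(x)-x)$ and, more importantly, the uniform positive lower bound on these distances should let the cone-spanning operation commute with the Kuratowski upper limit.

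With these ingredients in place, I would invoke Proposition~\ref{Properties}(4), namely $\limsup_{X\ni x\to x_0}\mathcal{N}(x)\subset\mathcal{N}(x_0)$, transport it through the identification above to conclude $\limsup_{X\ni x\to x_0} N_xX\subset N_{x_0}X$, and then apply the polar operator $N_0(\cdot)$. Since polarity reverses inclusions and should convert outer limits of cones into inner limits of their polars, this should yield $\liminf_{X\ni x\to x_0} N_0 N_xX\supset N_0 N_{x_0}X$. Invoking the convexity-based identity $N_0 N_xX=C_xX$ on both sides then finishes the proof.

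The delicate step I expect will be the passage from $\limsup N_xX\subset N_{x_0}X$ to $\liminf N_0 N_xX\supset N_0 N_{x_0}X$, which is a duality-versus-Kuratowski-limit issue for unbounded convex cones rather than a formal triviality. The natural route is to take $v\in N_0\limsup_{x\to x_0} N_xX$, characterize membership in $N_0 N_xX$ by the separating conditions $\langle v,v_x\rangle\leq 0$ for all $v_x\in N_xX$, and argue by contradiction: if $v\notin\liminf N_0 N_xX$, then a sequence $x_\nu\to x_0$ and vectors $v_\nu\in N_{x_\nu}X$ would satisfy $\langle v, v_\nu/\|v_\nu\|\rangle$ bounded away from zero, contradicting the nonpositivity of this quantity along the upper limit. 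Beyond this point the argument should be formal manipulation of Kuratowski limits of cones, with the positive lower bound on reaching radii doing the essential work of preventing pathological normalisations of the normal rays.
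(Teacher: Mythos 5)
Your proposal follows essentially the same route as the paper's own proof: the tangent cone criterion for local convexity, the identification $N_xX=\mathbb{R}_{\geq 0}(\mathcal{N}(x)-x)$ justified by the lower bound on the reaching radius, the transport of $\limsup_{X\ni x\to x_0}\mathcal{N}(x)\subset\mathcal{N}(x_0)$ to the normal cones, and the same contradiction argument via normalised scalar products to pass from the outer limit of the $N_xX$ to the inner limit of their polars. The approach and all key steps match; no substantive difference to report.
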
 

Note here that the theorem above generalises the tangent cone criterion in the o-minimal setting. 
\begin{prop}
Let $X$ be a definable closed subset of $\mathbb{R}^n$, and let $x_0$ be a point of $X$. If $C_{x_0}X$ is nonconvex, then \[C_{x_0}X\not\subset \liminf_{X\ni x\to x_0} C_xX.\]
\end{prop}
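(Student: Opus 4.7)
The plan is to exploit the nonconvexity of $C_{x_0}X$ to produce an explicit witness lying in $C_{x_0}X\setminus L$, where $L:=\liminf_{X\ni x\to x_0}C_xX$. I would choose $v_1,v_2\in C_{x_0}X$ whose midpoint $v:=\tfrac{1}{2}(v_1+v_2)$ does not lie in the closed nonconvex cone $C_{x_0}X$; note that $v\neq 0$ since $0\in C_{x_0}X$. The proof proceeds by contradiction: assume both $v_1,v_2$ belong to $L$, and aim to conclude $v\in C_{x_0}X$, contradicting the choice of $v$.

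To derive this conclusion, the plan is to pass through the regular stratum. If $x_0$ is isolated in $X$, then $C_{x_0}X=\{0\}$ is convex, so I assume $x_0$ is not isolated; o-minimal cell decomposition then produces a sequence $x_\nu\in Reg_1X$ with $x_\nu\to x_0$, along which $C_{x_\nu}X=T_{x_\nu}X$ is a linear subspace. The assumption $v_i\in L$ yields $v^i_\nu\in T_{x_\nu}X$ with $v^i_\nu\to v_i$, and linearity of $T_{x_\nu}X$ forces $v_\nu:=\tfrac{1}{2}(v^1_\nu+v^2_\nu)\in T_{x_\nu}X$ with $v_\nu\to v$.

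The crux will be a diagonal extraction that upgrades tangency at $x_\nu$ into tangency at $x_0$. Since $X$ is $\mathscr{C}^1$ near each $x_\nu$, the direction $v_\nu\in T_{x_\nu}X$ is realized by $X$: there exist $y^\nu_k\in X$ with $y^\nu_k\to x_\nu$ and $t^\nu_k>0$ satisfying $t^\nu_k(y^\nu_k-x_\nu)\to v_\nu$. After passing to a subsequence of $x_\nu$ so that $\|x_\nu-x_0\|$ decays very quickly, I would select indices $k(\nu)$ ensuring simultaneously $\|y^\nu_{k(\nu)}-x_\nu\|<1/\nu$, $\|t^\nu_{k(\nu)}(y^\nu_{k(\nu)}-x_\nu)-v\|<1/\nu$, and the delicate condition $t^\nu_{k(\nu)}\|x_\nu-x_0\|<1/\nu$. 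This last condition is the main obstacle---it forces a lower bound on $\|y^\nu_{k(\nu)}-x_\nu\|$ that is compatible with the first bound only because of the enforced fast decay of $\|x_\nu-x_0\|$. Setting $y_\nu=y^\nu_{k(\nu)}$ and $t_\nu=t^\nu_{k(\nu)}$, the decomposition $t_\nu(y_\nu-x_0)=t_\nu(y_\nu-x_\nu)+t_\nu(x_\nu-x_0)\to v$ places $v$ in the contingent cone $C_{x_0}X$, delivering the required contradiction.
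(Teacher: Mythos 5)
Your reduction is clean up to the final step: assuming $C_{x_0}X\subset L$, you correctly produce $x_\nu\in Reg_1X$ tending to $x_0$ and $v_\nu\in T_{x_\nu}X=C_{x_\nu}X$ with $v_\nu\to v=\tfrac12(v_1+v_2)\notin C_{x_0}X$. The gap is in the diagonal extraction. To get $t_\nu(y_\nu-x_0)\to v$ you need simultaneously $t_\nu(y_\nu-x_\nu)\to v$ and $t_\nu\|x_\nu-x_0\|\to 0$; since $v\neq 0$ the first forces $t_\nu\sim\|v\|/\|y_\nu-x_\nu\|$, so the second forces $\|y_\nu-x_\nu\|\gg\|x_\nu-x_0\|$, i.e.\ you need witnesses of the tangency $v_\nu\in C_{x_\nu}X$ at scales much \emph{larger} than $\|x_\nu-x_0\|$. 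But membership in the contingent cone only guarantees witnesses at arbitrarily \emph{small} scales, and the $\mathscr{C}^1$-chart around $x_\nu$ that you invoke to realise $v_\nu$ at prescribed scales cannot have radius exceeding $\|x_\nu-x_0\|$, because $x_0$ itself is $\mathscr{C}^1$-singular (a point with nonconvex tangent cone cannot lie in $Reg_1X$). Hence the window $\bigl(\nu\|v\|\,\|x_\nu-x_0\|,\,1/\nu\bigr)$ into which you must place $\|y^\nu_{k(\nu)}-x_\nu\|$ is nonempty as an interval, but all the witnesses you actually control sit below its left endpoint; passing to a rapidly decaying subsequence of $(x_\nu)$ merely re-indexes the points without enlarging the set of scales available at each of them.

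The implication you are after (that $v_1,v_2\in L$ forces $\tfrac12(v_1+v_2)\in C_{x_0}X$) is in fact true, but it is essentially the Rockafellar--Cornet theorem identifying $\liminf_{X\ni x\to x_0}C_xX$ with the Clarke tangent cone, which is convex and contained in the contingent cone; the elementary two-scale argument does not substitute for that theorem, for exactly the reason above. The paper takes a completely different route: it applies Motzkin's theorem to the nonconvex cone $C_{x_0}X$ to get a point $v$ of $M_{C_{x_0}X}$, uses $M_{C_{x_0}X}\subset C_{x_0}M_X$ to produce a curve $\psi$ in $M_X$ tangent to $v$ together with a companion curve $\gamma(t)\in m_X(\psi(t))$, and then exhibits a second closest-point direction $r\in C_{x_0}X$ whose scalar product with $\psi(t)-\gamma(t)\in N_{\gamma(t)}X$ stays positive in the limit, so that $r\notin\liminf C_xX$. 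If you wish to keep your strategy, either quote the Clarke-cone identification outright or prove the convexity of $L$ via Rockafellar's sequential characterisation rather than by the diagonal trick.
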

\begin{proof}
Assume that $x_0=0$ and the tangent cone $C_0X$ is nonconvex. Then, thanks to the tangent cone criterion, \[0\in\overline{M_{C_0X}}\subset C_0 M_X.\]

Fix now a vector $v\in M_{C_0X}$ and find a curve $\psi:[0,1]\to M_X$ tangent to $v$, meaning $C_0\psi([0,1])=\mathbb{R}_{\geq 0}v$. Possibly after a reparametrisation of $\psi$, we can choose a curve $\gamma:[0,1]\ni t\to \gamma(t)\in m_X(\psi(t))$ tangent to a certain $s\in m_{C_0X}(v)$. Note that the change of $\psi$ parametrisation is needed only to ensure the continuity of $\gamma$. To justify the $s\in m_{C_0X}(v)$ part of $\gamma$ definition observe that $\lim_{t\to 0}\gamma(t)/\|\gamma(t)\|\in C_0X$. Thus, its direction cannot be closer to $v$ than any element of $m_{C_0X}(v)$. On the other hand, should there exist a vector $s'\in C_0X$ closer to $v$ than $s$, $\gamma(t)$ would not realise distance for $\psi(t)$ for small $t>0$.
Now, since $v$ was a point of the tangent cone's medial axis, a point $r\in m_{C_0X}(v)\backslash \lbrace s\rbrace$ exists. For such $r$, we have \[\langle v-s,r\rangle =\langle v-r+r-s,r\rangle =\langle r-s,r\rangle =\|r\|^2-\langle s,r\rangle >0\]
since $\|r\|=\|s\|$ due to the conic structure of $C_0X$ and since the vectors $r$ and $s$ are not collinear.
Moreover, $\psi(t)-\gamma(t)\in N_{\gamma(t)}X;$ thus, \[C_{\gamma(t)}X\subset\lbrace w\in\mathbb{R}^n|\,\langle\psi(t)-\gamma(t),w\rangle \leq 0\rbrace.\]

Consequently,
\[\liminf_{X\ni x\to 0}C_xX\subset \liminf_{t\to 0} \lbrace w\in\mathbb{R}^n|\,\langle\frac{\psi(t)-\gamma(t)}{\|\gamma(t)\|},w\rangle <0\rbrace.\]

Next, since $\gamma$ and $\psi$ are tangent to $s$ and $v$ respectively, it is possible to calculate $\lim_{t\to 0} \frac{\|\psi(t)\|}{\|\gamma(t)\|}.$ Indeed by taking $t_0$ close to zero we can demand $\psi((0,t_0)),\gamma((0,t_0))$ to be subsets of the cones \[V(r):=\mathbb{R}_+\cdot\mathbb{B}(v,r),\;S(r):=\mathbb{R}_+\cdot\mathbb{B}(s,r)\] respectively, with $r$ arbitrary small. Consider the plane $P:=span\lbrace v,s\rbrace$ and denote respectively by $C(r),F(r)$ the rays in $P\cap S(r)$ closest and furthest to $v$.  Having fixed $t_1\in (0,t_0)$ we can see that $\|\gamma(t_1)\|\in [l(r),u(r)]$ where \[l(r):=\min\lbrace p(x)|x\in V(r),\|x\|=\|\psi(t_1)\|\rbrace,\; p\text{ is the projection on }F(r)\] and \[u(r):=\max\lbrace q(x)|x\in V(r),\|x\|=\|\psi(t_1)\|\rbrace,\; q\text{ is the projection on }C(r).\] By taking the limit $r\to 0$ we observe that \[\lim_{t\to 0} \frac{\|\psi(t)\|}{\|\gamma(t)\|}=\frac{\|v\|}{\|s\|}.\]

Therefore, we compute \[\lim_{t\to 0} \langle\frac{\psi(t)-\gamma(t)}{\|\gamma(t)\|}, r\rangle =\lim_{t\to 0} \langle\frac{\psi(t)}{\|\psi(t)\|}\frac{\|\psi(t)\|}{\|\gamma(t)\|}-\frac{\gamma(t)}{\|\gamma(t)\|}, r\rangle =\langle\frac{v}{\|v\|}\frac{\|v\|}{\|s\|}-\frac{s}{\|s\|},r\rangle .\]

The limit is positive; hence the approximating scalar products have to become positive for $t$ close to zero. In the end,
\[r\notin \liminf_{X\ni x\to 0} C_{x}X.\]

\end{proof}

The new method is strong enough to settle cases unreachable for the tangent cone criterion, such as a beautiful surface from \cite{Ghomi}.

\begin{figure}
    \centering
    \includegraphics[width=0.7\textwidth]{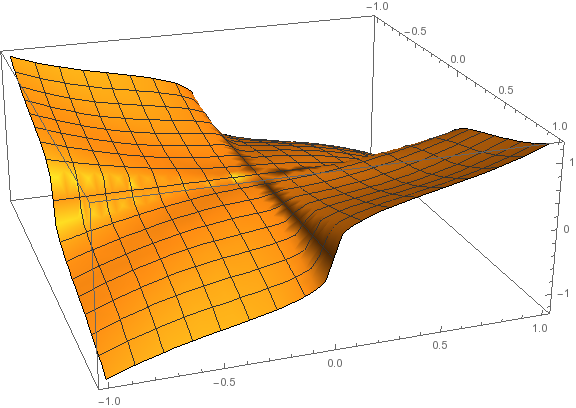}
    \caption{The surface from the Example~\ref{Ghomi}.}
    \label{fig:Ghomi}
\end{figure}

\begin{ex}(Howard, Ghomi)\label{Ghomi}
Let $X:=\lbrace z^3=xy(x^4+y^4)\rbrace\subset\mathbb{R}^3$ (see Figure~\ref{fig:Ghomi}). It is a surface, and the tangent cone to $X$ at any point is a plane. However, it is not $\mathscr{C}^1$-smooth at the origin. The tangent cones at points lying on the $y$-axis are constantly equal to $\lbrace 0 \rbrace\times\mathbb{R}^2$, whereas the tangent cone at the origin is equal to $\mathbb{R}^2\times\lbrace 0 \rbrace$. Therefore, the lower limit of tangent cones cannot be a superset of the tangent cone at the origin, and consequently, the medial axis of $X$ reaches the~origin.
\end{ex}
 
Sadly, an analogous inequality involving the tangent cone and the upper Kuratowski limit does not ensure an approach of the medial axis. One can see it easily from the example of a closed ball $B\subset\mathbb{R}^n$. For any boundary point, a tangent cone to $B$ is a half-space, whereas the upper limit of the cones comes as the whole space.

In case the studied set is a low-dimensional topological manifold, it is possible to spot the approaching medial axis just by observing an unusual tangent cone at a point. 

\begin{prop}
Let $\Gamma\subset\mathbb{R}^n$ be a $k$-dimensional definable topological submanifold with $k<3$. Assume that $x\in \Gamma$ and $C_{x}\Gamma$ is convex and does not form a $k$-dimensional vector space. Then there exists a sequence $Reg_2\Gamma\ni x_\nu\to x$ with $r(x_\nu)\to 0$ and, consequently, $x\in\overline{M_X}$. 
\end{prop}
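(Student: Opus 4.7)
My plan is to invoke Rataj's rigidity result (recalled at the opening of Section~3) to force $x\in\overline{M_\Gamma}$ directly, and then to extract the desired smooth-point sequence from nearby medial-axis points by a short distance estimate.

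First I would argue that $\Gamma$ cannot be $\mathscr{C}^1$-smooth at $x$: otherwise $C_x\Gamma$ would equal the tangent space $T_x\Gamma$, a $k$-dimensional linear subspace, contradicting the hypothesis that $C_x\Gamma$ is not a $k$-vector space. By the Rataj-type theorem recalled at the opening of Section~3 (which applies precisely in the range $k<3$ to a topological $k$-submanifold of $\mathbb{R}^n$), one therefore obtains $x\in\overline{M_\Gamma}$.

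Next I would choose a sequence $p_\nu\in M_\Gamma$ with $p_\nu\to x$. Since $Reg_2\Gamma$ is open and dense in the definable manifold $\Gamma$, a standard genericity perturbation in $\mathbb{R}^n$ lets me assume that $m(p_\nu)\cap Reg_2\Gamma\neq\emptyset$. Picking $q_\nu\in m(p_\nu)\cap Reg_2\Gamma$ and some distinct $q'_\nu\in m(p_\nu)$ (which exists as $p_\nu\in M_\Gamma$), the triangle inequality gives $\|q_\nu-x\|\leq\|q_\nu-p_\nu\|+\|p_\nu-x\|\leq 2\|p_\nu-x\|\to 0$.

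Setting $v_\nu:=(p_\nu-q_\nu)/\|p_\nu-q_\nu\|\in V_{q_\nu}$, a short Cauchy--Schwarz computation from $\|q'_\nu-p_\nu\|=\|q_\nu-p_\nu\|$ and $q'_\nu\neq q_\nu$ yields $\langle q'_\nu-q_\nu,v_\nu\rangle>0$; hence, for any $t>\|p_\nu-q_\nu\|$ the point $q'_\nu$ becomes strictly closer to $q_\nu+tv_\nu$ than $q_\nu$, so $q_\nu\notin m(q_\nu+tv_\nu)$. This forces $r_{v_\nu}(q_\nu)=\|p_\nu-q_\nu\|$. Combining this with the continuity proposition of Section~1 for $r_v$ at points of $Reg_2\Gamma$ and with the liminf definition of $\tilde{r}_v$, I obtain $r(q_\nu)\leq\tilde{r}_{v_\nu}(q_\nu)\leq r_{v_\nu}(q_\nu)=\|p_\nu-q_\nu\|\to 0$, so the sequence $x_\nu:=q_\nu$ satisfies the proposition.

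The main obstacle I expect is the genericity step ensuring $m(p_\nu)\cap Reg_2\Gamma\neq\emptyset$: it relies on a dimension count showing that the set of $p\in M_\Gamma$ whose entire closest-point set lies in the $(k-1)$-dimensional stratum $\Gamma\setminus Reg_2\Gamma$ is lower-dimensional in $M_\Gamma$, which is routine in the low dimensions $k<3$. Once this is in place, the argument is a direct application of Rataj's theorem plus the explicit distance estimate above.
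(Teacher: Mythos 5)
Your proposal takes a genuinely different route from the paper's, and it contains a real gap. The paper never invokes the Rataj--Zaj\'i\v{c}ek theorem in this proof: it assumes, for contradiction, that $r>R$ on $Reg_2\Gamma$ near $x$ and derives a purely topological contradiction --- for $k=1$, a tube around one branch of the curve would consist of points with a unique closest point on that branch, leaving no room for the second branch (both branches being tangent to the single ray $C_x\Gamma$); for $k=2$, one constructs a set $Z$ from a solid-torus-like region $T$ and the normal extensions $\pm G$ of a curve in $\Gamma$ tangent to a direction $v\in Reg_1\,C_x\Gamma$ with $-v\notin C_x\Gamma$, and shows that $Z$ disconnects $\Gamma$ near $x$ even though $\Gamma\setminus Z$ is the homeomorphic image of a connected subset of $\mathbb{R}^2$. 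This is where the hypotheses ``$k<3$'' and ``$C_x\Gamma$ convex but not a $k$-dimensional vector space'' are actually consumed; your argument uses neither the convexity nor, in any essential way, the dimension bound, which should already be a warning sign. Your opening step (non-vector-space tangent cone $\Rightarrow$ not $\mathscr{C}^1$ at $x$ $\Rightarrow$, by \cite{Rataj}, $x\in\overline{M_\Gamma}$) is at least consistent with how the paper uses \cite{Rataj} elsewhere, but it requires a local version of that theorem for the manifold-with-boundary piece $\Gamma\cap\mathbb{B}(x,\delta)$, and your parenthetical claim that the theorem ``applies precisely in the range $k<3$'' contradicts the paper's own unrestricted statement of it; either you are proving something strictly stronger than the proposition (suspicious), or you are silently relying on a dimension-restricted version you would need to source.

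The genuine gap is the genericity step. You assert that, after perturbation, $m(p_\nu)\cap Reg_2\Gamma\neq\emptyset$, justified by a dimension count showing that $\lbrace p\in M_\Gamma\,|\, m(p)\subset Sng_2\Gamma\rbrace$ is lower-dimensional in $M_\Gamma$. That count is false in general: already for a definable curve in $\mathbb{R}^2$, two convex corners facing one another produce a one-dimensional piece of the one-dimensional set $M_\Gamma$ every point of which has \emph{both} closest points at the corners, so the ``bad'' set can have full dimension in $M_\Gamma$. (For $k=1$ one can still close the argument near the particular point $x$, because $Sng_2\Gamma$ is locally finite and every closest point of $p_\nu$ converges to $x$, so $m(p_\nu)\subset Sng_2\Gamma$ would force $m(p_\nu)\subset\lbrace x\rbrace$, contradicting $\# m(p_\nu)>1$; but for $k=2$ the singular locus is a curve through $x$ and this escape is unavailable.) Handling exactly this difficulty is the entire content of the paper's Proposition~\ref{osiaganie zera na reg2}, which proceeds by an iterated stratification $Sng_2\Gamma$, $Sng_2(Sng_2\Gamma),\ldots$ and a propagation of reaching-radius lower bounds across strata; it cannot be dismissed as routine. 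By contrast, your final computation --- that $q_\nu\in m(p_\nu)\cap Reg_2\Gamma$ and $q'_\nu\neq q_\nu$ in $m(p_\nu)$ give $\langle q'_\nu-q_\nu,v_\nu\rangle>0$ and hence $r_{v_\nu}(q_\nu)=\|p_\nu-q_\nu\|\to 0$ --- is correct and matches how the paper itself extracts small reaching radii from medial-axis points, so the argument would be complete once the genericity step is actually proved rather than asserted.
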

\begin{proof} Without loss of generality, we can assume that $x=0$.

If $\Gamma$ is a definable curve, then the tangent cone at $0$ must be spanned over a single nonzero vector $v$. Denote by $\Gamma_+$ one of the two branches of $\Gamma$ starting at the origin. If all the reaching radii $r(x_\nu)$ were greater than a positive real number $R$ for $x_\nu$ in some neighbourhood of $0$, then for a certain $\varepsilon>0$ all points in \[\lbrace (u,w)\in span(v)^\perp\times span(v)=\mathbb{R}^n|\; \|u\|<R/2,\, \|w\|<\varepsilon \rbrace  \] would have a unique closest point in $\Gamma_+$, leaving no room for the other branch of $\Gamma$. 

For the rest of the proof, assume that $\dim \Gamma=2$. Since $C_0\Gamma$ is convex and at most two-dimensional, it can contain at most two linearly independent vectors. By~rotation, we can assume, therefore, that $C_0\Gamma$ is a subset of $\mathbb{R}^2\times\lbrace 0\rbrace ^{n-2}$.
Let $h$ be a definable homeomorphism between $\mathbb{R}^2$ and a neighbourhood of the origin in $\Gamma$ such that $h(0)=0$.

Assume that the assertion does not hold; that is, \[\exists R>0, U\text{-a neighbourhood of }x,\forall x\in Reg_2\Gamma\cap U:\; r(x)>R.\] Then,
$T:=\lbrace (x,y)\in\mathbb{R}^2\times \mathbb{R}^{n-2}|\,(\|y\|-R)^2+\|x\|^2\leq R^2\rbrace$ meets $\Gamma$ solely at the origin. Additionally, for any curve $\gamma:[0,1]\to\Gamma\cap U$ of the $\mathscr{C}^2$ class with an image contained in $Reg_2\Gamma$ for which $\lim_{t\to 0}\gamma(t)=0$, there is $r(\gamma(t))>R$. Since $Reg_2\Gamma$ is open and dense in $\Gamma$, we can choose $\gamma$ tangential to any vector from $C_0\Gamma$. Since $\Gamma$ is a submanifold, the tangent cone $C_0\Gamma$ cannot form a vector space of dimension one. Therefore there exists a vector $v\in Reg_1 \,C_0\Gamma$ such that $-v\notin C_0\Gamma$. Let the curve $\gamma$ be tangential at the origin to that direction.

Note now that $h^{-1}\circ\gamma$ is a definable curve in $\mathbb{R}^2$ starting at the origin. Therefore by shrinking $\gamma$ we can ensure that $h^{-1}\circ\gamma([0,1])$ does not disconnect $\mathbb{R}^2$. Moreover, it is possible to form a set $G$ by extending $\gamma(t)$ in the directions normal to $\Gamma$ up to the distance of $R$ without ever touching $\Gamma$. Then, $-G$ also remains disjoint with $\Gamma$.

To observe the final contradiction shrink $U$, making it disconnected by the union $Z:=G\cup -G\cup T.$ Now, both parts of $U\backslash Z$ have a piece of $\Gamma$ in them; what follows is that $Z$ disconnects $\Gamma$ into two separated parts. However, $\Gamma\backslash Z$ is the image under the homeomorphism $h$ of the connected set $\mathbb{R}^2\backslash h^{-1}(\gamma([0,1]))$.

\end{proof}

This remark concludes the investigation of the tangent cone and the medial axis. The following results concentrate on the assumption of the set being a topological manifold. To that end, we will call a point $x\in X$ \textit{non-marginal} whenever a $(\dim_xX)$-dimensional topological submanifold $x\in\Gamma\subset X$ can be found in a neighbourhood of $x$ in $X$ and \textit{marginal} in the opposite case. It is easy to check that non-marginal points are dense in $X$. They can be thought of as the `main' part of the set.

\begin{prop}\label{osiaganie zera na reg2}
Let $\Gamma\subset\mathbb{R}^n$ be a definable topological submanifold and take $x_0\in \Gamma\cap \overline{M_\Gamma}$. Then there exists a sequence $Reg_2\Gamma\ni x_\nu\to x_0$ with $r(x_\nu)\to 0$. 
\end{prop}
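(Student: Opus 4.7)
The plan is to argue by contradiction. Suppose the conclusion fails; then there exist $R>0$ and an open neighbourhood $U$ of $x$ such that $r(y)\geq R$ for every $y\in Reg_2\Gamma\cap U$. Using $x\in\overline{M_\Gamma}$, I would pick a sequence $a_\nu\in M_\Gamma$ with $a_\nu\to x$ and, for each $\nu$, two distinct closest points $p_\nu,q_\nu\in m(a_\nu)$. Write $t_\nu:=\|a_\nu-p_\nu\|=\|a_\nu-q_\nu\|=d(a_\nu,\Gamma)$ and $v_\nu:=(a_\nu-p_\nu)/t_\nu$. Since $t_\nu\leq\|a_\nu-x\|\to 0$, we have $t_\nu\to 0$ and $p_\nu,q_\nu\to x$; moreover $a_\nu\in\mathcal{N}(p_\nu)$ places $v_\nu\in V_{p_\nu}$.

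The cornerstone is the sharp identity $r_{v_\nu}(p_\nu)=t_\nu$. The lower bound is immediate from $p_\nu\in m(a_\nu)$. For the reverse inequality I would expand
$$\|p_\nu+R'v_\nu-q_\nu\|^2-R'^2=-2t_\nu(R'-t_\nu)(1-\cos\theta_\nu),$$
where $\theta_\nu:=\angle(a_\nu-p_\nu,a_\nu-q_\nu)\in(0,\pi)$ because $p_\nu\neq q_\nu$; for any $R'>t_\nu$ this is strictly negative, placing $q_\nu$ inside the open ball $B(p_\nu+R'v_\nu,R')$ and ejecting $p_\nu$ from $m(p_\nu+R'v_\nu)$. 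When $p_\nu\in Reg_2\Gamma$ along a subsequence, one has $\widetilde{V}_{p_\nu}=V_{p_\nu}\ni v_\nu$ there, so $r(p_\nu)\leq\tilde{r}_{v_\nu}(p_\nu)\leq r_{v_\nu}(p_\nu)=t_\nu\to 0$, contradicting $r(p_\nu)\geq R$ and finishing the argument.

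The substantive case is therefore when $p_\nu\in Sng_2\Gamma$ for all large $\nu$. Here I would perturb the witness triple $(a_\nu,p_\nu,q_\nu)$ to a new triple $(b_\nu,y_\nu,q_\nu')$ with $b_\nu\in M_\Gamma$, $y_\nu\in Reg_2\Gamma\cap U$, the pair $y_\nu,q_\nu'$ distinct in $m(b_\nu)$, and $\|b_\nu-a_\nu\|,\|y_\nu-p_\nu\|,\|q_\nu'-q_\nu\|$ all tending to zero. Granted such a perturbation, the same ball computation applied at $y_\nu$ yields $r_{w_\nu}(y_\nu)=\|b_\nu-y_\nu\|\to 0$ with $w_\nu:=(b_\nu-y_\nu)/\|b_\nu-y_\nu\|\in V_{y_\nu}$, and the regular-case argument above applies at $y_\nu$ to produce $r(y_\nu)\to 0$, contradicting $r(y_\nu)\geq R$.

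The main obstacle is executing this perturbation. The natural ingredients are: the definability and cell decomposition of
$$\mathcal{S}:=\{(a,p,q)\in\mathbb{R}^n\times\Gamma\times\Gamma:p\neq q,\;p,q\in m(a)\};$$
the openness and density of $Reg_2\Gamma$ in $\Gamma$; the topological manifold hypothesis on $\Gamma$, preventing $\Gamma$ from collapsing to a lower-dimensional set near $p_\nu$; and the upper semicontinuity of $\mathcal{N}$ from Proposition~\ref{Properties}(4). The delicate step is to show that a maximal-dimensional stratum of $\mathcal{S}$ accumulating at $(a_\nu,p_\nu,q_\nu)$ projects onto a $\Gamma$-piece meeting $Reg_2\Gamma$ arbitrarily close to $p_\nu$; I expect this coordination of o-minimal stratification with the topological manifold structure of $\Gamma$ to be the technical heart of the argument.
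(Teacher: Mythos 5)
Your handling of the case where the nearby medial-axis points admit a closest point in $Reg_2\Gamma$ is correct: the identity $\|p_\nu+R'v_\nu-q_\nu\|^2-R'^2=-2t_\nu(R'-t_\nu)(1-\cos\theta_\nu)$ checks out, it forces $r_{v_\nu}(p_\nu)=t_\nu$, and this is the same elementary fact the paper relies on when it writes $r(b_\nu)\leq\|b_\nu-a_\nu\|$ for $b_\nu\in m(a_\nu)$, $a_\nu\in M_\Gamma$. The problem is that this is the easy half. The substantive case, $m(a_\nu)\subset Sng_2\Gamma$ for all large $\nu$, is exactly where the proposition's content lies, and there you offer only a plan: you assert that the witness triple $(a_\nu,p_\nu,q_\nu)$ can be perturbed to one whose closest point lies in $Reg_2\Gamma$, and you yourself flag the existence of this perturbation as an unresolved ``technical heart.'' No argument is given for why a maximal-dimensional stratum of your incidence set $\mathcal{S}$ should project onto a piece of $\Gamma$ meeting $Reg_2\Gamma$; the closest-point set of an entire open piece of $M_\Gamma$ can a priori be confined to the lower-dimensional set $Sng_2\Gamma$, and ruling this out (or living with it) is precisely what requires the topological-manifold hypothesis. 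As written, the proposal is a proof of a special case plus a conjecture.

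The paper resolves the hard case by a different mechanism: an induction on singularity depth. Assuming $r\geq R$ on $Reg_2\Gamma\cap V$, it first shows each point $a\in Reg_2(Sng_2\Gamma)\cap V$ is separated from $M_\Gamma$ (using the continuity of the reaching radius of the definable set $Sng_2\Gamma$ on its $\mathscr{C}^2$-smooth part, from \cite{Bial1}, together with the observation that closest points of medial-axis points near $a$ would have small reaching radius); then it invokes the results of \cite{Rataj} --- this is where the manifold hypothesis enters --- to get continuity of normal spaces at $a$, approximates any $v\in V_a$ by normals $v_\nu\in V_{x_\nu}$ with $x_\nu\in Reg_2\Gamma$, and passes to the limit in $x_\nu\in m(x_\nu+Rv_\nu)$ to propagate the bound $r(a)\geq R$ onto $Reg_2(Sng_2\Gamma)\cap V$. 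Iterating over $Sng_2(\cdots(Sng_2\Gamma))$ at most $\dim\Gamma$ times yields $r\geq R$ on all of $\Gamma\cap V$, contradicting $x\in\overline{M_\Gamma}$. If you want to salvage your approach, you would need to either prove your perturbation claim or replace it with such a propagation argument along the strata of $Sng_2\Gamma$; the upper semicontinuity of $\mathcal{N}$ alone will not do it.
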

\begin{proof}
Assume the assertion does not hold. Then there exists a neighbourhood $V$ of $x_0$ and $R\geq 0$ such that the reaching radius $r(x)$ satisfies \[r(x)\geq R\text{ for all } x\in Reg_2\Gamma\cap V.\]

Take a point $a\in Reg_2(Sng_2\Gamma)\cap V$. Then the reaching radius calculated for the set $Sng_2\Gamma$, denoted by $r_{Sng_2\Gamma}(a),$ is greater than $\varepsilon>0$, and consequently, since the reaching radius is continuous on $\mathscr{C}^2$-smooth part of any set (cf. \cite{Bial1}), there exists a neighbourhood $U$ of $a$ such that $r_{Sng_2\Gamma}|U>\varepsilon$.

Now, should $a\in \overline{M_\Gamma}$, there would exist a sequence of points $a_\nu\in M_\Gamma$ convergent to $a$. What would follow is that any sequence of points $b_\nu\in m(a_\nu)$ would also converge to $a$. But that is a contradiction, as for $\nu$ large enough it would mean $b_\nu\in (Reg_2\Gamma\cup(Reg_2(Sng_2\Gamma)\cap U))\cap V$ and $r(b_\nu)\leq \|b_\nu-a_\nu\|\to 0$.

Thus, for every $a\in Reg_2(Sng_2\Gamma)\cap V$ the reaching radius $r(a)$ is positive. Now results of \cite{Rataj} give us the continuity of tangent and normal spaces at $a$ since $\Gamma$ is a topological manifold and $a$ is separated from the medial axis. Since $Reg_2\Gamma$ is dense, that means we can approach any $v\in V_a$ by directions $v_\nu\in V_{x_\nu}$ with $Reg_2\Gamma\ni x_\nu\to a$. For such a sequence, we have $x_\nu\in m(x_\nu+Rv_\nu)$. After passing with $\nu$ to infinity, we obtain 
\[m(a+Rv)=m(\lim_{\nu\to\infty} x_\nu +Rv_\nu)\supset\limsup_{\nu\to\infty} m(x_\nu+Rv_\nu)\supset\lim_{\nu\to\infty} \lbrace x_\nu\rbrace=\lbrace a\rbrace;\]
thus $r(a)\geq R$ for any $a\in Reg_2(Sng_2\Gamma)\cap V$.

For any definable set $X$, the singular part $Sng_2 X$ is a definable subset of $X$ of a dimension lower than $\dim X$. Therefore after at most $\dim \Gamma$ repetitions of the reasoning above applied to regular and singular parts of the $Sng_2(\ldots(Sng_2\Gamma))$, we conclude that no point of $\Gamma\cap V$ can belong to the closure of $M_\Gamma$. It is a clear contradiction with the assumptions.
\end{proof}

\begin{rem}
Observe that the assumption about the topological structure of $\Gamma$ was needed for obtaining $r(a)\geq R$ for any $a\in Reg_2(Sng_2\Gamma)$. Such an intermediate result is crucial for the dimension reduction step $Sng_2\Gamma\to Sng_2(Sng_2\Gamma)$. However, we do not need it if the latter set is empty. In explicit\'e, the assertion holds for any set $\Gamma$ with empty$Sng_2(Sng_2\Gamma)=\emptyset$. It means, in particular, it also holds for topological manifolds with a boundary of $\mathscr{C}^2$ class.
\end{rem}

We will use the proposition to prove that glueing new pieces to a manifold approached by a medial axis cannot separate the medial axis from the obtained union.

\begin{thm}\label{powiekszanie zbioru osiąganego przez szkielet}
Let $\Gamma\subset\mathbb{R}^n$ be a closed $k$-dimensional topological manifold. Let $G\subset\mathbb{R}^n$ be a $k$-dimensional definable superset of $\Gamma$. Then \[\Gamma\cap\overline{M_\Gamma}\subset G\cap \overline{M_G}.\]
\end{thm}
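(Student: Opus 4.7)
The inclusion $\Gamma\cap\overline{M_\Gamma}\subset G$ is trivial since $\Gamma\subset G$, so the task reduces to showing $\Gamma\cap\overline{M_\Gamma}\subset\overline{M_G}$. I would fix $x\in\Gamma\cap\overline{M_\Gamma}$ and apply Proposition~\ref{osiaganie zera na reg2} to $\Gamma$ (allowed because $\Gamma$ is a topological manifold) to obtain a sequence $x_\nu\in Reg_2\Gamma$ with $x_\nu\to x$ and $r_\Gamma(x_\nu)\to 0$. The plan is to push this sequence to points that are simultaneously smooth for $\Gamma$ and for $G$, then exploit $\Gamma\subset G$ to bound $r_G$ by $r_\Gamma$, and finally rerun the reasoning behind Proposition~\ref{osiaganie zera na reg2} on $G$ to produce medial axis points of $G$ converging to $x$.

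For the approximation step, I would use that $Sng_2 G$ is a definable subset of $G$ of dimension strictly less than $k=\dim\Gamma$, so $\Gamma\cap Sng_2 G$ has dimension below $k$, which makes $\Gamma\cap Reg_2 G$ dense in $\Gamma$ at every $k$-dimensional point, and in particular near each $x_\nu\in Reg_2\Gamma$. Combining this with the openness of $Reg_2\Gamma$ in $\Gamma$ and the continuity of $r_\Gamma$ on $Reg_2\Gamma$ (see \cite{Bial1}), a diagonal choice yields $y_\nu\in Reg_2\Gamma\cap Reg_2 G$ with $y_\nu\to x$ and $r_\Gamma(y_\nu)\to 0$.

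The geometric crux is that locally at each $y_\nu$ the sets $\Gamma$ and $G$ coincide. Near $y_\nu\in Reg_2 G$, $G$ is a $\mathscr{C}^2$-embedded $k$-manifold while $\Gamma$ is a topological $k$-manifold contained in it; read in a smooth chart of $G$, invariance of domain forces $\Gamma$ to be open in $G$ near $y_\nu$, and since $\Gamma$ is closed in $\mathbb{R}^n$ (hence in $G$) this promotes to a local equality $\Gamma=G$. In particular $V_{y_\nu}\Gamma=V_{y_\nu}G$, and for any $v$ in this common set of normal directions the implication $y_\nu\in m_G(y_\nu+tv)\Rightarrow y_\nu\in m_\Gamma(y_\nu+tv)$ holds (because $d(y_\nu+tv,\Gamma)\leq t=d(y_\nu+tv,G)\leq d(y_\nu+tv,\Gamma)$), so $r_v^G(y_\nu)\leq r_v^\Gamma(y_\nu)$; taking the infimum, $r_G(y_\nu)\leq r_\Gamma(y_\nu)\to 0$.

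To finish, pick $v_\nu\in V_{y_\nu}G$ attaining $r_G(y_\nu)$ (the infimum is attained by continuity of $v\mapsto r_v^G(y_\nu)$ on the compact sphere $V_{y_\nu}$) and set $p_\nu:=y_\nu+r_G(y_\nu)v_\nu$, so $\|p_\nu-y_\nu\|\to 0$ while $y_\nu\in m_G(p_\nu)$ by upper semi-continuity of $m_G$. If $\#m_G(p_\nu)\geq 2$ then $p_\nu\in M_G$ directly; otherwise the failure of $y_\nu\in m_G(y_\nu+tv_\nu)$ for $t$ just above $r_G(y_\nu)$ forces a caustic at $p_\nu$, whose perpendicular-bisector neighbourhoods contain points of $M_G$ arbitrarily close. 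In either case $p_\nu\in\overline{M_G}$ and $p_\nu\to x$, so $x\in\overline{M_G}$. The most delicate point I expect is the second step: securing $y_\nu\in Reg_2 G$ without destroying the control $r_\Gamma(y_\nu)\to 0$ rests on the dimensional mismatch $\dim(\Gamma\cap Sng_2 G)<\dim\Gamma$ together with the continuity of $r_\Gamma$ on $Reg_2\Gamma$.
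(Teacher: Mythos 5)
Your proof is correct in substance but takes a genuinely different route from the paper's. The paper does not perturb the sequence at all: it keeps the points $g_\nu\in Reg_2\Gamma$ produced by Proposition~\ref{osiaganie zera na reg2} together with the witnesses $a_\nu\in M_\Gamma$ satisfying $g_\nu\in m_\Gamma(a_\nu)$ and $\|g_\nu-a_\nu\|=r(g_\nu)\to 0$, and then runs a dichotomy on the tangent cone of $G$ at $g_\nu$: since $C_{g_\nu}\Gamma$ is a $k$-dimensional vector space contained in $C_{g_\nu}G$ and $\dim C_{g_\nu}G\leq k$, either $C_{g_\nu}G=C_{g_\nu}\Gamma$ (whence $a_\nu-g_\nu\in N_{g_\nu}G$ and the segment $[g_\nu,a_\nu]$, which shrinks to $x$, must meet $M_G$) or $C_{g_\nu}G$ is nonconvex and the tangent cone criterion applies directly at $g_\nu$. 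You instead secure local coincidence $\Gamma=G$ near auxiliary points $y_\nu\in Reg_2\Gamma\cap Reg_2G$ via invariance of domain plus closedness of $\Gamma$, which yields the clean comparison $r_G(y_\nu)\leq r_\Gamma(y_\nu)$; the price is the perturbation step, which is legitimate ($\Gamma\cap Sng_2G$ has dimension below $k$ while $\Gamma$ is $k$-dimensional everywhere, and the continuity of the reaching radius on the $\mathscr{C}^2$-smooth part, which the paper also invokes, preserves the control $r_\Gamma(y_\nu)\to 0$). The paper's dichotomy is shorter and confronts the points where $G$ branches off $\Gamma$ head-on through nonconvexity; your argument sidesteps them entirely. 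Two points to polish: the invariance-of-domain step should record that $\dim_{y_\nu}G=k$ (forced by $\Gamma\subset G$ and $\dim G=k$, so $Reg_2G$ near $y_\nu$ really is a $k$-dimensional chart), and the closing ``caustic'' paragraph is vaguer than it needs to be --- it is cleaner to conclude via the equivalence the paper itself uses, namely that $x\in\overline{M_G}$ exactly when the reaching radius of $G$ fails to be separated from zero in every neighbourhood of $x$.
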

\begin{proof}
Choose a point $x\in\Gamma\cap\overline{M_\Gamma}$. The previous proposition allows us to approach $x$ with a sequence of points $g_\nu\in Reg_2 \Gamma$ with $r(g_\nu)$ convergent to zero. In such a case, by definition of the reaching radius, there exists a sequence of points $a_\nu\in M_\Gamma$ such that $g_\nu\in m(a_\nu)$ and $r(g_\nu)=\|g_\nu-a_\nu\|$. The last equality means, in particular, that the sequence $(a_\nu)$ converges to $x$. 

The dimension of the tangent cones $C_{g_\nu}G$ is bounded by $k$. Therefore, either $C_{g_\nu}G=C_{g_\nu}\Gamma$ and $a_\nu-g_\nu\in N_{g_\nu}G$ or $C_{g_\nu}G$ is not convex. In both cases, we can find elements of $M_G$ close to $x$.This first one, due to $M_G\cap [g_\nu,a_\nu]\neq \emptyset$ and convergence $[g_\nu,a_\nu]\to \lbrace x\rbrace$. In the second, due to the tangent cone criterion. 

\end{proof}
We are ready to show the final result on the non-marginal $\mathscr{C}^1$ singularities approached by the medial axis.
\begin{cor}\label{ostatni wniosek}
Assume that $X$ is a closed definable set, $x\in X$. Let $\Gamma \subset X$ be a topological manifold of dimension $\dim_x X$, and assume that $x\in \Gamma \backslash \overline{M_X}$. Then, there exists $U\in\mathcal{V}(x)$ such that $\Gamma\cap U= X\cap U$, and, in particular, $X$ is locally a $\mathscr{C}^{1,1}$ manifold at $x$.  
\end{cor}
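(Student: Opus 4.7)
My plan is to argue by contradiction. Suppose no such neighborhood $U$ exists, so that for every $r>0$ the intersection $(X\setminus\Gamma)\cap\mathbb{B}(x,r)$ is nonempty; the aim is to produce a sequence in $M_X$ converging to $x$, contradicting $x\notin\overline{M_X}$. As preliminary setup, I would apply Theorem~\ref{powiekszanie zbioru osiąganego przez szkielet} with $G:=X$, after restricting to a small ball around $x$ in which $\Gamma$ may be treated as closed (licit since $\Gamma$ is a topological submanifold of dimension $\dim_x X$, and hence locally closed at $x$). The inclusion $\Gamma\cap\overline{M_\Gamma}\subset X\cap\overline{M_X}$ together with $x\notin\overline{M_X}$ forces $x\notin\overline{M_\Gamma}$, so $\Gamma$ has positive reach at $x$. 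By the result of \cite{Rataj} recalled at the beginning of the section, $\Gamma$ is then locally $\mathscr{C}^{1,1}$ at $x$ and admits a tubular neighborhood of some positive radius on which the nearest-point projection $\pi$ to $\Gamma$ is continuous and single-valued. I would fix $\varepsilon>0$ smaller than both this tubular radius and $d(x,\overline{M_X})$; the case $\dim_x X=n$ may be discarded separately, since then invariance of domain forces $\Gamma$ to contain a whole neighborhood of $x$ in $\mathbb{R}^n=X$, giving the conclusion at once.

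For the core construction, I would introduce the two closed definable subsets $A:=\Gamma\cap\overline{\mathbb{B}(x,\varepsilon)}$ and $B:=\overline{X\setminus\Gamma}\cap\overline{\mathbb{B}(x,\varepsilon)}$, whose union is $X\cap\overline{\mathbb{B}(x,\varepsilon)}$. Pick a sequence $y_\nu\in X\setminus\Gamma$ converging to $x$; then $\pi(y_\nu)\to x$ as well. Consider the continuous conflict function $\phi(p):=d(p,A)-d(p,B)$. A small perturbation $p_\nu^+$ of $y_\nu$ off $X$ yields $\phi(p_\nu^+)>0$ (since $y_\nu\in B\setminus A$), while a small perturbation $p_\nu^-$ of $\pi(y_\nu)$ off $X$ yields $\phi(p_\nu^-)<0$ (since $\pi(y_\nu)\in A$ has positive distance $\|y_\nu-\pi(y_\nu)\|$ to $B$ in the limit). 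Joining $p_\nu^+$ to $p_\nu^-$ by a short continuous path, the intermediate value theorem provides $m_\nu$ on it with $\phi(m_\nu)=0$. Writing $a_\nu\in A$ and $b_\nu\in B$ for realizers of $d(m_\nu,A)=d(m_\nu,B)$, both lie in $X$ and are equidistant to $m_\nu$; provided $a_\nu\ne b_\nu$ and $m_\nu\notin X$, we conclude $m_\nu\in M_X$. Since the joining path shrinks to $\{x\}$, the sequence $m_\nu$ converges to $x$, yielding the desired contradiction.

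The main obstacle is ensuring that $m_\nu$ can be picked outside $X$ and with $a_\nu\ne b_\nu$. The tubular neighborhood forces $\pi(m_\nu)$ to be the unique $A$-realizer, so a degenerate collapse would require $b_\nu=\pi(m_\nu)\in\Gamma\cap\overline{X\setminus\Gamma}$; by choosing $p_\nu^+$ sufficiently close to $y_\nu\in X\setminus\Gamma$ and invoking the continuity of the $B$-realizer map, one keeps the corresponding $b_\nu$ in $X\setminus\Gamma$, hence off $\Gamma$, ruling out the collapse. The same continuity, together with the strict signs of $\phi$ at $p_\nu^\pm$, arranges the zero of $\phi$ away from $X$, since at any crossing of the path with $X$ the function $\phi$ equals either $-d(\cdot,B)$ or $d(\cdot,A)$ and its sign there is strictly positive or strictly negative rather than zero. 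If preferred, one may replace this local argument by a definable selection inside $\{\phi=0\}\setminus X$ which produces a definable arc converging to $x$ along which both non-degeneracy conditions hold generically. Once the contradiction is secured, $\Gamma\cap U=X\cap U$ for some $U\in\mathcal{V}(x)$; and then the ``in particular'' clause is immediate, since $X$ is now locally a topological manifold at $x$ with positive reach, hence locally a $\mathscr{C}^{1,1}$ manifold by \cite{Rataj}.
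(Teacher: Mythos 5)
Your opening moves---applying Theorem~\ref{powiekszanie zbioru osiąganego przez szkielet} with $G=X$ to force $x\notin\overline{M_\Gamma}$, then invoking \cite{Rataj} to make $\Gamma$ locally $\mathscr{C}^{1,1}$ with a single-valued nearest-point projection---coincide with the paper's. You diverge in the mechanism that turns a sequence $y_\nu\in X\setminus\Gamma$, $y_\nu\to x$, into nearby medial-axis points: the paper works at the foot point $z_\nu=m_\Gamma(y_\nu)$ and argues a dichotomy (either the segment joining $y_\nu$ to $z_\nu$ meets $\mathcal{N}(z_\nu)$ away from $z_\nu$, which forces a switch of nearest points along that segment and hence a point of $M_X$ at distance at most $\|y_\nu-z_\nu\|$ from $z_\nu$, or else $C_{z_\nu}X$ strictly contains the $(\dim_xX)$-dimensional vector space $C_{z_\nu}\Gamma$ and is therefore nonconvex, so the tangent cone criterion applies), whereas you run an intermediate-value argument on the conflict function $\phi=d(\cdot,A)-d(\cdot,B)$ of the two pieces $A=\Gamma\cap\overline{\mathbb{B}(x,\varepsilon)}$ and $B=\overline{X\setminus\Gamma}\cap\overline{\mathbb{B}(x,\varepsilon)}$.

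The conflict-set route is a reasonable idea, but as written it has two genuine gaps. First, the sign $\phi(p_\nu^-)<0$ rests on $d(\pi(y_\nu),B)>0$, and your justification---that this distance equals $\|y_\nu-\pi(y_\nu)\|$---is false: since $y_\nu\in B$, that quantity is only an upper bound, and the distance can vanish. Under the contradiction hypothesis nothing prevents $X\setminus\Gamma$ from accumulating at the foot point itself; take $\Gamma=\mathbb{R}^2\times\{0\}$ and $X\setminus\Gamma$ a fin $\{(s,0,u):0<u\le 1\}$ attached orthogonally along a line, so that $\pi(y_\nu)\in\overline{X\setminus\Gamma}$ for every $y_\nu$ on the fin. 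There both endpoint perturbations are close to $A$ and to $B$ simultaneously and the sign of $\phi(p_\nu^-)$ is uncontrolled; you would need to relocate the negative endpoint to a point of $A\setminus B$ (which is dense in $A$ because the frontier $\overline{X\setminus\Gamma}\setminus(X\setminus\Gamma)$ has dimension smaller than $\dim_xX$ in the o-minimal setting), and you do not do this. Second, the non-degeneracy $a_\nu\ne b_\nu$ and $m_\nu\notin X$ is argued via ``continuity of the $B$-realizer map'', but the nearest-point projection onto $B$ is a multifunction that is at best upper semicontinuous, and the zero $m_\nu$ of $\phi$ lies somewhere along the joining path rather than near $y_\nu$, which is where you claim control; the collapse $a_\nu=b_\nu\in\Gamma\cap\overline{X\setminus\Gamma}$ is therefore not actually excluded. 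Both defects look repairable (generic choice of the path together with the frontier-dimension fact), but the paper's dichotomy at the foot point sidesteps them entirely.
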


\begin{proof}
Due to Theorem \ref{powiekszanie zbioru osiąganego przez szkielet}, $M_\Gamma$ is separated from $x$. Therefore, there exists $U$ - a neighbourhood of $x$, such that the multifunction $m_\Gamma$ is univalent in $U$. Moreover, the results from \cite{Rataj} assure that $\Gamma\cap U$ is a $\mathscr{C}^{1,1}$ manifold, meaning its tangent cones are $(\dim_x X)$-dimensional vector spaces. 

Assume now that points of $X\backslash \Gamma$ exist arbitrarily close to $x$. Surely $m_X(y)=\lbrace y\rbrace$ for $y\in X\cap U\backslash \Gamma$. We can see now that an intersection of $ \mathcal{N}(m_\Gamma(y))$ and $(y,m_\Gamma(y)]$ is nonempty, or $C_{m_\Gamma(y)}X$ is not convex. Either way, we obtain a contradiction with~$x\notin \overline{M_X}$.

The class of $X$ smoothness is now a consequence of the results from \cite{Rataj}.
\end{proof}

The last corollary states that the medial axis approaches all the $\mathscr{C}^1$-singular non-marginal points of the set. In particular, the medial axis must approach all singularities of a definable curve but its endpoints, where the issue depends on the superquadracity of the curve (cf. \cite{BirbrairDenkowski}). 

\begin{figure}
    \centering
    \includegraphics[width=0.7\textwidth,height=9cm]{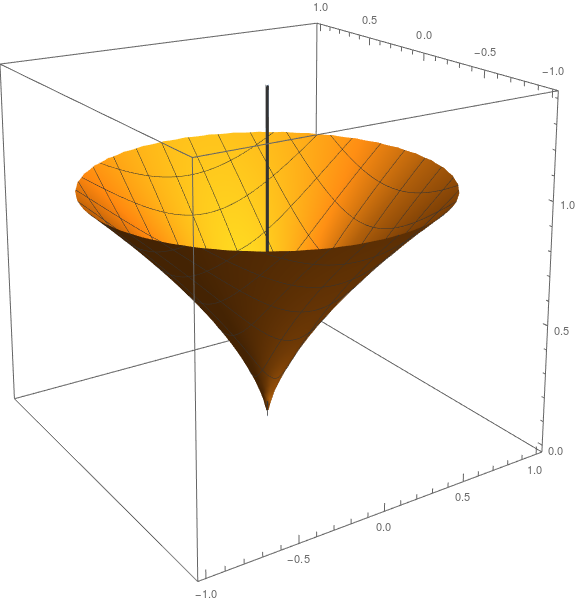}
    \caption{The set $X=\lbrace (z-\sqrt[3]{x^2+y^2})(x^2+y^2)=0\rbrace$ from Example~\ref{trabka}.}
    \label{fig:Trabka}
\end{figure}

\begin{ex}\label{trabka}
Regard $X:=\lbrace (z-\sqrt[3]{x^2+y^2})(x^2+y^2)=0\rbrace$. Neither the results of Rataj and Zaj\'i\v{c}ek nor Theorem~\ref{lepszy motzkin} settles whether $0\in \overline{M_X}$. Indeed the lower limit of the tangent cones equals the $z$-axis, and $X$ does not form a topological manifold for any neighbourhood of $0$. However, $\lbrace z-\sqrt[3]{x^2+y^2}=0\rbrace$ is a proper subset of $X$ and a $(\dim_0 X)$-dimensional manifold. Therefore we can derive that the origin belongs to $\overline{M_X}$ basing on Corollary~\ref{ostatni wniosek}
\end{ex}
\printbibliography 
\end{document}